\newcommand{\al}{\alpha}
\newcommand{\be}{\beta}
\newcommand{\ga}{\gamma}
\newcommand{\ep}{\varepsilon}
\renewcommand{\phi}{\varphi}
\newcommand{\si}{\sigma}
\newcommand{\ZZ}{{\mathbb Z}}
\newcommand{\CC}{{\mathbb C}}
\newcommand{\cE}{\mathcal E}
\newcommand{\cS}{\mathcal S}
\newcommand{\sS}{\mathfrak S}
\newcommand{\inv}{\operatorname{inv}}
\newcommand{\sm}{\smallsetminus}
\newcommand{\wh}[1]{\widehat{#1}}
\newcommand*\wbar[1]{
  \hbox{ \kern-0.2em%
    \vbox{%
      \hrule height 0.5pt  
      \kern0.25ex
      \hbox{%
        \kern-0.15em
        \ensuremath{#1}%
        \kern-0.05em
      }%
    }%
  \kern0.05em}%
} 
\newcommand{\KP}[1]{%
  \begin{tikzpicture}[baseline=-\dimexpr\fontdimen22\textfont2\relax]
  #1
  \end{tikzpicture}%
}
\newcommand{\KPY}{%
  \KP{
    \draw[->,color=black, thick] (-0.4,-0.4) -- (0.4,0.4);
    \draw[color=black, thick] (-0.4,0.4) -- (-0.05,0.05);
    \draw[->,color=black, thick] (0.05,-0.05) -- (0.4,-0.4);
    \draw[black, dashed] circle (0.58 cm);
  }%
}
\newcommand{\KPX}{%
  \KP{
    \draw[->,color=black, thick] (-0.4,0.4) -- (0.4,-0.4);
    \draw[color=black, thick] (-0.4,-0.4) -- (-0.05,-0.05);
    \draw[->,color=black, thick] (0.05,0.05) -- (0.4,0.4);
    \draw[black, dashed] circle (0.58 cm);
  }%
}
\newcommand{\KPA}{%
  \KP{%
    \draw[->,color=black, thick] (-0.4,-0.4) .. controls (0.02,0) .. (0.4,-0.4);
    \draw[->,color=black, thick] (-0.4,0.4) .. controls (-0.02,0) .. (0.4,0.4);
    \draw[black, dashed] circle (0.58 cm);
  }%
}
\newtheorem{theorem}{Theorem}[section] 
\newtheorem{lemma}[theorem]{Lemma}
\newtheorem{proposition}[theorem]{Proposition}
\theoremstyle{definition}     
\newtheorem{definition}[theorem]{Definition}
\newtheorem{example}[theorem]{Example}
\theoremstyle{remark}
\title[Braid representatives minimizing the number of simple walks]{Braid representatives minimizing \\ the number of simple walks}
\author[H. U. Boden]{Hans U. Boden}
\address{Mathematics \& Statistics, McMaster University, Hamilton, Ontario}
\email{boden@mcmaster.ca}
\author[M. Shimoda]{Matthew Shimoda}
\address{Mathematics \& Statistics, McMaster University, Hamilton, Ontario}
\email{shimodm@mcmaster.ca}
\subjclass[2020]{57K10 (primary), 57K14 (secondary)}
\keywords{Knots, braids, simple walk, colored Jones polynomial}
\date{\today}                 
\begin{document}

\begin{abstract}
Given a knot, we develop methods for finding a braid representative that minimizes the number of simple walks. Such braids lead to an efficient method for computing the colored Jones polynomial of $K$, following an approach developed by Armond and implemented by Hajij and Levitt. We use this method to compute the colored Jones polynomial in closed form for the knots $5_2, 6_1,$ and $7_2$. The set of simple walks can change under reflection, rotation, and cyclic permutation of the braid, and we prove an invariance property which relates the simple walks of a braid to those of its reflection under cyclic permutation. We study the growth rate of the number of simple walks for families of torus knots. Finally, we present a table of braid words that minimize the number of simple walks for knots up to 13 crossings.
\end{abstract}

\maketitle


\subsection{Introduction} \label{S1}  \setcounter{section}{1} \setcounter{theorem}{0} 
The Jones polynomial $V_{L}(t)$ is an invariant of knots and links defined using quantum representations of braids.  It can be uniquely characterized as the polynomial-valued invariant of oriented links with $V_{\bigcirc}(t) = 1$ for $\bigcirc$ the unknot and satisfying the skein relation 
\[ t^{-1} V_{L_+}(t) - t V_{L_-}(t) = (t^{1/2} - t^{-1/2}) V_{L_0}(t),\]
where $L_+, L_-, L_0$ are identical outside a neighborhood, where they are as pictured 
\begin{equation*} 
\KPX \;\; L_+ \qquad \KPY \;\; L_- \qquad \KPA \;\; L_0.
\end{equation*}
The Jones polynomial admits a combinatorial state sum formula that can be used to compute it, but the complexity of the computation grows exponentially with the crossing number. So this method is impractical for computations that involve links with a large number of crossings. 

The \textbf{colored Jones polynomial} is a powerful knot invariant packaged as a sequence of Laurent polynomials $J_{N,K}(q)$ for $N\geq 2$, with $N=2$ giving the usual Jones polynomial. It encodes subtle geometric information about the knot complement and appears in several famous open problems in quantum topology, including (i) the Volume Conjecture \cite{Kashaev, Murakami-Murakami}; 
(ii) the Slope Conjecture \cite{Garoufalidis-Slope, Kalfagianni-Tran}; and (iii) the AJ Conjecture \cite{Garoufalidis-AJ}. The first relates the limit of $J_{N,K}(q)$ at $q=e^{2\pi i/N}$ as $N\to \infty$ to the hyperbolic volume of the knot complement;  the second posits that every Jones slope of a knot is the slope of an incompressible surface in the knot complement; and the third asserts that the recurrence relation for the $N$-th colored Jones polynomials is given by the $A$-polynomial of \cite{CCGLS}, a plane curve associated to the character variety of $SL(2,\CC)$ representations of the knot group. 

For further background information on the colored Jones polynomial and its relation to the geometry of 3-manifolds, we refer the reader to the books \cite{Murakami-Yokota} and \cite{FGP-2013} and their extensive bibliographies. 
The colored Jones polynomial also has intriguing number theoretic interpretations that will not be discussed in this paper;  for more details about these aspects, we refer the reader to the recent papers \cite{BGS-2021, Lovejoy-Osburn-2021, Lovejoy-Osburn-2019, BBMORTZ-2021} and their bibliographies.

Our goal in this paper is to study a probabilistic method for computing the colored Jones polynomial. This approach was first developed by Huynh and L\^{e} in \cite{Huynh-Le-2005}, and it was later described in terms of walks along braids by Armond \cite{Armond-2014}. Armond identified the special role played by \textit{simple walks}, resulting in an extremely efficient algorithm for computing $J_{N,K}(q)$, which has been implemented by Hajij and Levitt \cite{Hajij-Levitt-2018}. The algorithm is exponential in the number of simple walks on the braid, so it is natural to try minimize the number of simple walks before executing the program of \cite{Hajij-Levitt-2018}. However, as we shall see,  this number is highly dependent on the braid representative chosen. 

We study how the number of simple walks changes under taking reflection, rotation, and cyclic permutation of a given braid. We also examine the growth rate of the number of simple walks for two families of torus knots. For instance, for the family of $(2,n)$ torus knots, the simple walks satisfy a Fibonacci recurrence and grow exponentially in $n$. For the family of $(3,n)$ torus knots, the simple walks satisfy a tribonacci recurrence and also grow exponentially in $n$. We further prove that the total number of simple walks on a braid and its reflection is invariant under cyclic permutation. This fact is used to facilitate finding braid representatives with the least number of simple walks. For knots up to 13 crossings, we developed a program that finds minimal braid representatives. When these braids are used in conjunction with the program of Hajij and Levitt \cite{Hajij-Levitt-2018}, this provides an efficient method for computing the colored Jones polynomial for these knots.

We close this section with a brief synopsis of the rest of this paper. In \ref{S2}, we review the method from \cite{Huynh-Le-2005, Armond-2014} for  computing the colored Jones polynomial. In  \ref{S3}, we use it to compute $J_{N,K}(q)$ in closed form for the knots $5_2, 6_1,$ and $7_2$. These computations were originally performed by Masbaum using skein theory, see \cite{Masbaum-2003}. In \ref{S4}, we recall the basic results about braid representatives for knots and study the effect of the Markov moves on the set of simple walks.  In  \ref{S5}, we introduce the set of semi-simple walks, and we show that it is invariant under cyclic permutation of the braid word. In  \ref{S6} and \ref{S7}, we study the growth rate of number of simple walks for two families of torus knots. In  \ref{S8}, we present the output of a program for finding braid representatives that minimize the number of simple walks.

\subsection{The colored Jones polynomial and walks along braids} \label{S2}  \setcounter{section}{2} 
Given a knot $K$ and integer $N \geq 2$, the colored Jones polynomial $J_{N,K}(q)$ is a Laurent polynomial in the variable $q^{1/2}$. It is normalized so that $J_{N,\bigcirc}(q) = 1$, where $\bigcirc$ is the unknot. When $N=2$, the colored Jones polynomial agrees with the usual Jones polynomial. In general, the $N$-th colored Jones polynomial of a knot $K$ can be expressed in terms of the usual Jones polynomial of the $(N-1)$ strand cable of $K$. However, since the crossing number of the $(N-1)$ strand cable of a knot is $(N-1)^2$ times the crossing number of the knot, this does not lead to a practical method for computing the colored Jones polynomial.

One approach for computing the colored Jones polynomial is presented by Huynh and L\^{e} \cite{Huynh-Le-2005}. Starting with a braid $\be$ whose closure is the given knot, Huynh and L\^{e} use methods from quantum algebra to express the colored Jones polynomial as the inverse of the quantum determinant of an almost quantum matrix. The matrix is constructed through the product of Burau matrices, which we obtain from the crossing and orientation properties of $\be$.

A second approach is presented by Armond \cite{Armond-2014}. It is based on a probabilistic interpretation of the colored Jones polynomial and involves counting \textbf{walks along braids}. This method is closely related to the previous one, and in fact it  provides a visual representation of the quantum algebra approach. The idea is to view walks along the braid as traversing the strands of the braid from the bottom to the top and to record information about the crossings and their orientations as a product of operators. The end result is the same as that obtained by taking the quantum determinant of the deformation of Burau matrices,  but Armond's approach is more accessible and requires less background material on operator theory. One interesting aspect is that the complexity of the computation is sensitive to the choice of braid word, and this will be explored further in \ref{S4}.
For now, we focus on describing Armond's approach and the special role played by the \textit{simple walks}.

We begin by introducing a little terminology from braid theory. 

\begin{definition}
A \textbf{braid} is a set of $m$ strands running from top to bottom with no reversals in vertical direction. The strands may cross each other, but only two strands can participate at each crossing. 

Given a braid $\be$, a \textbf{braid word} is an expression of the form 
\[\be = \si_{i_1}^{\ep_1}\si_{i_2}^{\ep_2} \dots \si_{i_\ell}^{\ep_\ell},\]
where $\ep_i = \pm 1$ and $\si_i$ is a symbol.
Braid words are read from left to right, and braids are drawn from top to bottom. For $1 \leq i \leq m-1$, $\si_i$ represents the braid with one crossing where the $(i+1)$-st strand crosses over the $i$-th strand. The inverse $\si_i^{-1}$ represents the braid where the $i$-th strand crosses over the $(i+1)$-st strand.
\end{definition}

The braid word $\be = \si_{i_1}^{\ep_1}\si_{i_2}^{\ep_2} \dots \si_{i_\ell}^{\ep_\ell}$ has $\ell$ crossings, and we say it has \textbf{braid length} $\ell.$ If $\be$ is a braid on $m$ strands, we say it has \textbf{braid width} $m$. Note that braid words are not uniquely determined, applying a relation (see below) will alter the word without changing the braid. The writhe of a braid is defined to be the sum of the  signs on all its crossings. For example, the braid word above has writhe $w(\be) = \sum_{i=1}^\ell \ep_i.$

The braid group on $m$ strands is denoted $B_m$. Abstractly, it is the group with generators $\si_1,\ldots,\si_{m-1}$ and relations given by  (i) $\si_i \si_j = \si_j \si_i$ for $1\leq i,j \leq m-1$ with $|i-j|>1$ and  (ii) $\si_i \si_{i+1} \si_i = \si_{i+1} \si_i \si_{i+1}$ for $1\leq i \leq m-2.$ Relation (i) is called \textit{far commutativity} and (ii) is called the \textit{Yang-Baxter  relation} (or \textit{braid relation}). The group operation is given by concatenation of words or, equivalently, by stacking geometric braids, one on top of the other. 
 
Next, we introduce the notions of paths and walks along braids. A \textbf{path} starts at the bottom of the braid and traverses arcs of the braid, sometimes jumping down, until it reaches the top of the braid. If the path starts at strand $i$ on the bottom and ends at strand $j$ at the top, we say it is a path from $i$ to $j$. Whenever the path encounters a crossing, if it is on the overstrand, it is allowed to jump down to the undercrossing arc. If it is on the understrand, then it must stay on that strand. At each crossing the path encounters, a \textbf{weight} from the set  $\{a_{i,\ep_i}, b_{i,\ep_i}, c_{i,\ep_i} \mid i=1,\ldots, \ell \}$ is assigned. The weight will depend on the crossing, its sign, and the arcs traversed by the path at that crossing. For example, if the path jumps down at the $i$-th crossing, it is assigned the weight $a_{i,\ep_i}$. Otherwise, it is assigned the weight $b_{i,\ep_i}$ if the path traverses the understrand  and $c_{i,\ep_i}$ if it traverses the overstrand. Note that, at a given crossing, the path will follow the braid unless it jumps down there. The total weight of the path is the product of the weights of the crossings.

A walk $W$ along $\be$ consists of a set $J \subseteq \{1, \dots, m \}$, a permutation $\pi$ of $J$, and a collection of paths with exactly one path in the collection from $j$ to $\pi(j)$ for each $j \in J$. The weight of a walk is $(-1)(-q)^{|J|+\inv(\pi)}$ times the product of the weights of the paths, where $|J|$ is the cardinality of $J$ and $\inv(\pi)$ is the number of inversions in $\pi$, i.e., the number of pairs of elements in $J$ with $i < j$ and $\pi(i) > \pi(j)$.

The paths in a walk are ordered from left to right, using their starting strand at the bottom of the braid. This induces an ordering on the weights. The order of the weights is important, and that is because the operators associated to the weights are non-commuting. Operators based at different crossings do commute, but operators at the same crossing do not.  Thus, the effect of non-commutativity can be computed locally at each crossings of the braid.  Given a walk $W$ and a crossing $i$ of the braid, we use $W_{(i)}$ to denote the local weight of paths of $W$ through crossing $i$ in the given order. If no path of $W$ passes through crossing $i$, then we set the local weight $W_{(i)}=1$.
 
A stack of walks is any ordered collection $W_1 \cdots W_k$ of walks. Visually, this can be viewed as stacking the walks on top of one another with $W_1$ at the top and $W_k$ on the bottom.  The weight of the stack is the product of the weights of the paths in the appropriate order. If two paths belong to different walks, then the path in the higher walk is multiplied to the left of the path in the lower walk. If two paths belong to the same walk, then the path which begins to the left of the other path is said to be above and is multiplied to the left of the other path. Given a stack $\cS=W_1\cdots W_k$ and a crossing $i$ of the braid, we let $\cS_{(i)}$ be the local weight of the stack at $i$; it is equal to the product $(W_1)_{(i)}\cdots (W_k)_{(i)}$ of local weights of the walks of the stack at $i$ in the given order.

With walks along braids established, we now show how they can be used to compute the colored Jones polynomial.  Let $R =\ZZ[q,q^{-1}].$ Let $\hat{x}$ and $\tau_{x}$ be operators on the ring $R [x^{\pm 1},y^{\pm 1},u^{\pm 1}]$ given by $\hat{x}f(x,y,\dots) = xf(x,y,\dots)$ and $\tau_{x}f(x,y,\dots) = f(qx,y,\dots)$. The operators $\hat{y}$, $\hat{u}$, $\tau_{y}$, $\tau_{u}$ are defined similarly. We associate operators to each of the crossing weights using the formulas:
\[ a_{+} = (\hat{u} - \hat{y}{\tau{x}}^{-1}){\tau_{y}}^{-1} ,\hspace{1cm} b_{+} = {\hat{u}}^2, \hspace{1cm} c_{+} = \hat{x}{\tau_{y}}^{-2}{\tau_{u}}^{-1},  \]
\[ a_{-} = (\tau_{y} - {\hat{x}}^{-1}){\tau_{x}}^{-1}\tau_{u} ,\hspace{1cm} b_{-} = {\hat{u}}^2 ,  \hspace{1cm} c_{-} = {\hat{y}}^{-1}{\tau_{y}}^{-1}\tau_{u}. \]

By taking the summation of all walks on the braid and writing their weights in terms of the above operators, we obtain the operator $P$. Letting $P$ act on the constant 1 and making the substitutions $x = z$, $y = z$ and $u = 1$, we obtain a polynomial $\cE(P)$. Let $\cE_{N}(P)$ denote the polynomial obtained by making the substitution $z = q^{N-1}$ to $\cE(P)$.

The next result shows how to compute the colored Jones polynomial of a knot from its braid representative.  It was proved by Huynh and L\^{e} in \cite{Huynh-Le-2005} and appears as Theorem 2.3 in \cite{Armond-2014}. 

\begin{theorem}\label{Huynh-Le1}
Let $K$ be a knot obtained as the closure of a braid $\be \in B_m$. Its colored Jones polynomial is given by
\begin{equation} \label{eq-HL}
J_{N,K}(q) = q^{(N-1)(w(\be)-m+1)/2} \sum_{n=0}^{\infty} \cE_{N}(P^n),  
\end{equation} 
where the operator $P$ is the sum of the weights of the walks on $\be$ with $J \subseteq \{2, \dots, m \}$.
\end{theorem}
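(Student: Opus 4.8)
The plan is to derive this identity from the quantum-determinant formula of Huynh and L\^{e}, reinterpreting their algebraic expression in the language of walks along $\be$. As recalled in the introduction, Huynh--L\^{e} express the colored Jones polynomial as a normalizing power of $q$ times the reciprocal of the quantum determinant of an almost-quantum matrix assembled from a product of Burau matrices of $\be$. To set this up I would attach to each generator $\si_i^{\pm 1}$ of the braid word a Burau-type operator-valued matrix whose only nontrivial block acts on strands $i$ and $i+1$, and whose entries are exactly the operators $a_\pm, b_\pm, c_\pm$ recording the three local behaviours of a path at a crossing: jumping down to the undercrossing arc, staying on the understrand, or staying on the overstrand. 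Multiplying these matrices in braid-word order yields a single operator-valued matrix $M$, its quantum determinant $D$ is the almost-quantum determinant above, and the reduction that removes the first strand is exactly what forces $J$ to range over $\{2,\dots,m\}$ rather than $\{1,\dots,m\}$.

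The combinatorial heart of the argument is a transfer-matrix dictionary. By the definition of matrix multiplication, a single entry of the ordered product $M$ expands as a sum over sequences of arcs threading through the successive crossings, which is precisely a path from $i$ to $j$ in the sense defined above, weighted by the ordered product of its local operator weights. Passing from entries to the quantum determinant, the term indexed by a subset $J$ and a permutation $\pi$ corresponds to a family of paths realizing $\pi$ on $J$, that is, to a walk; the signature weighting of the quantum determinant produces exactly the factor $(-1)(-q)^{|J|+\inv(\pi)}$ recorded in the weight of that walk. Summing over all such terms identifies the walk operator $P$ with the complement $1-D$, so that the geometric series $\sum_{n=0}^{\infty}P^n$ is the expansion of $D^{-1}$ at the level of operators. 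Here the iterated power $P^n$ matches a stack of $n$ walks, and the rule that the higher walk, and within a walk the leftmost path, is multiplied on the left reproduces the noncommutative order in which the operator weights are composed.

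It then remains to interchange $\cE_N$ with the summation. Since $\cE_N$ is the linear evaluation that has a polynomial act on the constant $1$ and then performs the substitutions $x=y=z$, $u=1$, and $z=q^{N-1}$, linearity gives $\cE_N(D^{-1})=\sum_{n=0}^{\infty}\cE_N(P^n)$ as soon as one checks that the specialization $z=q^{N-1}$ makes all but finitely many summands vanish; this finiteness is also what makes the algorithm of Hajij and Levitt terminate. Finally, the prefactor $q^{(N-1)(w(\be)-m+1)/2}$ is fixed by comparing the framing and writhe normalization implicit in the Burau and quantum-determinant construction with the standard normalization $J_{N,\bigcirc}(q)=1$; this is a direct bookkeeping computation in terms of the writhe $w(\be)$ and the width $m$.

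The step I expect to be the main obstacle is the precise matching carried out crossing by crossing: one must verify that the operators $a_\pm, b_\pm, c_\pm$ assigned to paths reproduce the entries of the reduced Burau blocks in the correct noncommutative order, and that the $(-1)(-q)^{|J|+\inv(\pi)}$ bookkeeping reproduces exactly the signature convention of the quantum determinant after the first strand has been removed. Once this local identification and its global sign are pinned down, the remainder is the formal manipulation of a geometric series of operators.
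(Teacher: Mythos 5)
The paper does not prove this theorem: it is quoted from Huynh--L\^{e} \cite{Huynh-Le-2005} and appears as Theorem~2.3 of \cite{Armond-2014}, so there is no in-paper argument to compare against. Your outline reconstructs precisely the route of those sources --- expand the entries of the ordered product of operator-valued Burau blocks as paths, the quantum determinant of the reduced matrix as walks with $J \subseteq \{2,\dots,m\}$, and its inverse as the geometric series $\sum_n P^n$ whose powers are stacks --- so in that sense it is the same approach as the cited proof. As written, however, it is a plan rather than a proof: the two verifications you defer (that $a_\pm, b_\pm, c_\pm$ are exactly the entries of the reduced Burau blocks composed in the stated noncommutative order, and that $(-1)(-q)^{|J|+\inv(\pi)}$ reproduces the quantum-determinant sign convention after deleting the first strand) together with the normalization computation fixing the prefactor $q^{(N-1)(w(\be)-m+1)/2}$ constitute essentially the entire content of the theorem, and none of them is carried out.
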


Stacks of walks are produced when we take the product of the weights of walks. This occurs in the step when we expand the operator $P$ to the power of $n$. 

Huynh and L\^{e} also gave a useful method for evaluating the terms $\cE_{N}(P^n)$ which avoids operator theory. The key result is the following lemma from \cite{Huynh-Le-2005} which computes $\cE_{N}(P^n)$ directly from the weights once they have been put in a preferred order. In the following, we suppress the dependence of the weights on the crossing and write $a_\pm, b_\pm, c_\pm$ instead of $a_{i,\pm}, b_{i,\pm}, c_{i,\pm}$.

\begin{lemma}\label{Huynh-Le2}
\begin{eqnarray*}
\cE_N(b_{+}^s c_{+}^r a_{+}^d) &=& q^{r(N-1-d)} \prod_{i=0}^{d-1} (1 - q^{N-1-r-i}) \\
\cE_N(b_{-}^s c_{-}^r a_{-}^d) &=& q^{-r(N-1)} \prod_{i=0}^{d-1} (1 - q^{r+i+1-N}) 
\end{eqnarray*}
\end{lemma}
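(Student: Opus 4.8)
The plan is to compute each side directly from the definition of the evaluation map, reading $\cE_N(O) = (O\cdot 1)\big|_{x=y=q^{N-1},\,u=1}$ for an operator word $O$, where the rightmost operator acts on the constant $1$ first. The only structural input needed is the set of $q$-commutation relations among the generators: in each variable one has $\tau_x\hat x = q\,\hat x\tau_x$ (and likewise for $y,u$), while operators in different variables commute. These yield, for instance, $\tau_y^{-1}\hat y = q^{-1}\hat y\,\tau_y^{-1}$ and $\tau_y^{-1}\hat y^{-1} = q\,\hat y^{-1}\tau_y^{-1}$, which are precisely what is needed to normal-order the words $b_\pm^s c_\pm^r a_\pm^d$. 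With these in hand, each formula reduces to three steps: (i) identify $a_\pm^d\cdot 1$ in closed form, (ii) push $c_\pm^r$ and $b_\pm^s$ through, and (iii) perform the substitution.

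For the positive case I would first observe that $1$ carries no $x$-dependence and that $a_+$ introduces none, so on such functions $a_+$ acts as $(\hat u-\hat y)\tau_y^{-1}$. I claim
\[ a_+^d\cdot 1 = \prod_{i=0}^{d-1}\bigl(u-q^{-i}y\bigr), \]
which follows by induction on $d$: applying $\tau_y^{-1}$ sends $\prod_{i=0}^{d-2}(u-q^{-i}y)$ to $\prod_{i=1}^{d-1}(u-q^{-i}y)$, and the factor $(\hat u-\hat y)$ then restores the missing $i=0$ term $(u-y)$. Next, since $\hat x$ commutes with the shift operators, $c_+^r = \hat x^{\,r}\tau_y^{-2r}\tau_u^{-r}$ with no extra power of $q$; it multiplies by $x^r$ and sends $y\mapsto q^{-2r}y$ and $u\mapsto q^{-r}u$, so that after factoring $q^{-r}$ out of each of the $d$ factors one obtains $x^r q^{-rd}\prod_{i=0}^{d-1}(u-q^{-r-i}y)$. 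Finally $b_+^s=\hat u^{2s}$ contributes $u^{2s}$, and the substitution $u=1$, $x=y=q^{N-1}$ turns $x^r$ into $q^{r(N-1)}$ and each factor into $1-q^{N-1-r-i}$, giving exactly $q^{r(N-1-d)}\prod_{i=0}^{d-1}(1-q^{N-1-r-i})$.

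The negative case runs along the same lines, and the same inductive mechanism gives
\[ a_-^d\cdot 1 = \prod_{i=0}^{d-1}\bigl(1-q^{i}x^{-1}\bigr), \]
now a function of $x$ alone, since $\tau_x^{-1}$ sends $x^{-1}\mapsto qx^{-1}$ while $\hat x^{-1}$ supplies the new factor. One then applies $c_-^r=(\hat y^{-1}\tau_y^{-1}\tau_u)^r$ and $b_-^s=\hat u^{2s}$ and substitutes. I expect the main obstacle to lie here: unlike $c_+$, the word $c_-$ contains the noncommuting pair $\hat y^{-1},\tau_y^{-1}$, so normal-ordering $c_-^r$ produces genuine powers of $q$ through $\tau_y^{-1}\hat y^{-1}=q\,\hat y^{-1}\tau_y^{-1}$, and the bookkeeping of these powers---together with the effect of the substitution $x=q^{N-1}$ on the product---must be tracked with care to land on the stated exponents $q^{-r(N-1)}$ and $1-q^{r+i+1-N}$. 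A useful consistency check on this bookkeeping is that the two displayed identities should correspond under $q\mapsto q^{-1}$, which pins down the exponents appearing in the product factor $1-q^{r+i+1-N}$ and localizes any remaining discrepancy to the overall $q$-prefactor.
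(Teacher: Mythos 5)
Your strategy --- compute $O\cdot 1$ directly from the operator definitions and then substitute --- is the natural one, and since the paper offers no proof of this lemma (it is quoted from Huynh--L\^{e}), there is nothing of the authors' to compare against. Your positive-crossing computation is correct and complete: $a_+^d\cdot 1=\prod_{i=0}^{d-1}(u-q^{-i}y)$, the three factors of $c_+$ act on pairwise distinct variables so $c_+^r$ contributes $x^r q^{-rd}$ while shifting $y$ by $q^{-2r}$, and the substitution $u=1$, $x=y=q^{N-1}$ yields exactly $q^{r(N-1-d)}\prod_{i=0}^{d-1}(1-q^{N-1-r-i})$.

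The negative case, however, is not a matter of careful bookkeeping: with the operators as printed, your computation provably cannot reach the stated formula. As you observe, $a_-^d\cdot 1=\prod_{i=0}^{d-1}(1-q^{i}x^{-1})$ is a function of $x$ alone; but $c_-=\hat y^{-1}\tau_y^{-1}\tau_u$ contains no operator in the variable $x$, so applying $c_-^r$ can never produce the shift by $r$ inside the product. Concretely, already for $s=0$, $r=d=1$ the direct computation gives $c_-a_-\cdot 1=y^{-1}(1-x^{-1})\mapsto q^{1-N}(1-q^{1-N})$, whereas the lemma asserts $q^{1-N}(1-q^{2-N})$; moreover $c_-^r\cdot 1=q^{r(r-1)/2}y^{-r}$ produces a spurious prefactor $q^{r(r-1)/2}$. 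The same printed $c_-$ also violates the relation $a_-c_-=q^{-1}c_-a_-$ stated shortly afterward, which localizes the difficulty to the transcription of $c_-$ rather than to your method: taking $c_-=\hat y^{-1}\tau_x^{-1}\tau_u$ instead, both the relation $a_-c_-=q^{-1}c_-a_-$ and the claimed evaluation $q^{-r(N-1)}\prod_{i=0}^{d-1}(1-q^{r+i+1-N})$ drop out of precisely the computation you outline. Finally, the consistency check you propose would mislead you: under $q\mapsto q^{-1}$ the two displayed evaluations differ by the factor $q^{rd}$, because the normal ordering $b^sc^ra^d$ is not preserved under the mirror ($a_+c_+=qc_+a_+$ while $a_-c_-=q^{-1}c_-a_-$), so matching the two lines under $q\mapsto q^{-1}$ does not pin down the exponents as you hope.
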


We will apply this lemma to the local weights $\cS_{(i)}$ of each stack at each crossing. However, before we can apply \Cref{Huynh-Le2}, we must first put the local weights at a crossing into the preferred order. This can always be achieved using the following relations:
\[ a_{+}b_{+} = b_{+}a_{+},\hspace{0.90cm} a_{+}c_{+} = qc_{+}a_{+}, \hspace{0.90cm} b_{+}c_{+} = q^2c_{+}b_{+}  \] 
\[ \;\; a_{-}b_{-} = q^2b_{-}a_{-},\hspace{0.6cm} a_{-}c_{-} = q^{-1}c_{-}a_{-},\hspace{0.5cm} b_{-}c_{-} = q^{-2}c_{-}b_{-} \]

Once the local weights have been put into the preferred order at each crossing, \Cref{Huynh-Le1} and \Cref{Huynh-Le2}  can be applied locally at each crossing to compute the colored Jones polynomial. 

The computation is simplified by the observation that only \textbf{simple walks} contribute to the  colored Jones polynomial \cite[Lemma 2.5 (a)]{Armond-2014}. Here, a walk is said to be \textbf{simple} if no two paths intersect in the walk, otherwise it is \textbf{non-simple}. It turns out that non-simple walks occur in cancelling pairs, so for the purpose of computing $J_{N,K}(q)$, it is enough to consider only simple walks.

The computation is further simplified by the fact that, for any stack of walks, the evaluation of its weights will vanish if the walks in the stack traverse the same arc on $N$ or more different levels  \cite[Lemma 2.5 (b)]{Armond-2014}. This is extremely useful because it  reduces the complexity of the computation and guarantees that only finitely many terms of $\sum_{n=0}^\infty \cE_{N}(P^n)$ contribute to the $N$-th colored Jones polynomial. In particular, for a fixed $N$, there will always be an upper bound to the integers $n$ which need to be considered in the infinite sum \cref{eq-HL}. In practice, this bound can be determined by comparing the arcs traversed by the set of all simple walks for a given braid word. 


\subsection{The colored Jones polynomial in closed form} \label{S3}  
\setcounter{section}{3} \setcounter{theorem}{0} 
In this section, we apply \Cref{Huynh-Le1} and \Cref{Huynh-Le2} to compute $J_{N,K}(q)$, the colored Jones polynomial, for the knots $5_2$, $6_1$ and $7_2$. This is achieved by choosing favorable braid representatives, namely those with only a few simple walks.

In \cite{Masbaum-2003}, Masbaum uses skein theory to compute the colored Jones polynomial for all twist knots, a class which includes $5_2, 6_1,$ and $7_2$. More general calculations of the colored Jones polynomial for the double twist knots can be found in \cite{Lovejoy-Osburn-2021, Lovejoy-Osburn-2019}.
 
\begin{figure}[ht]
\centering
\includegraphics[scale=0.06]{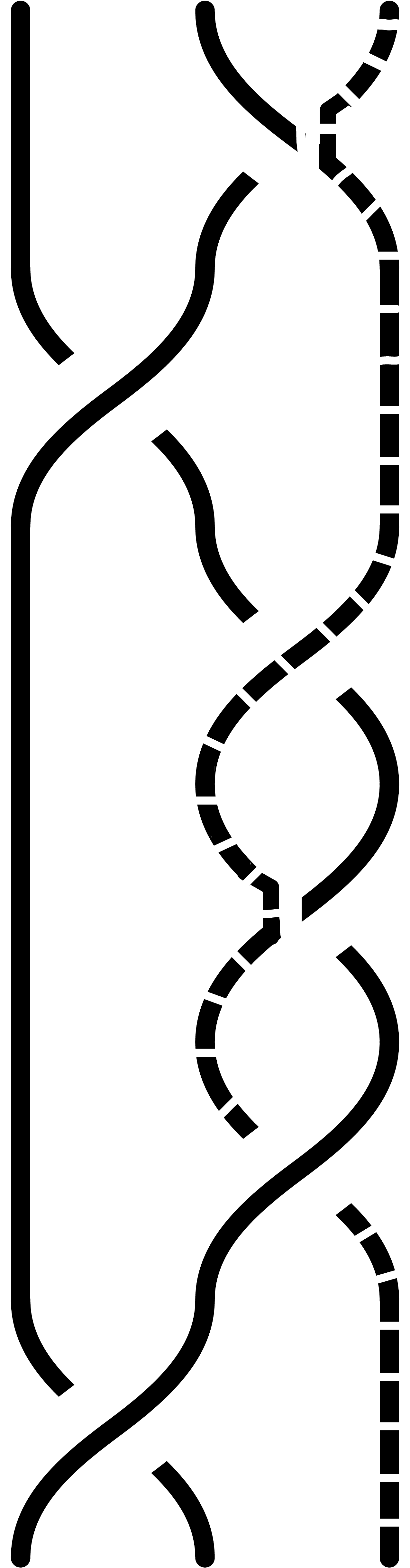}\hspace{1cm} \includegraphics[scale=0.06]{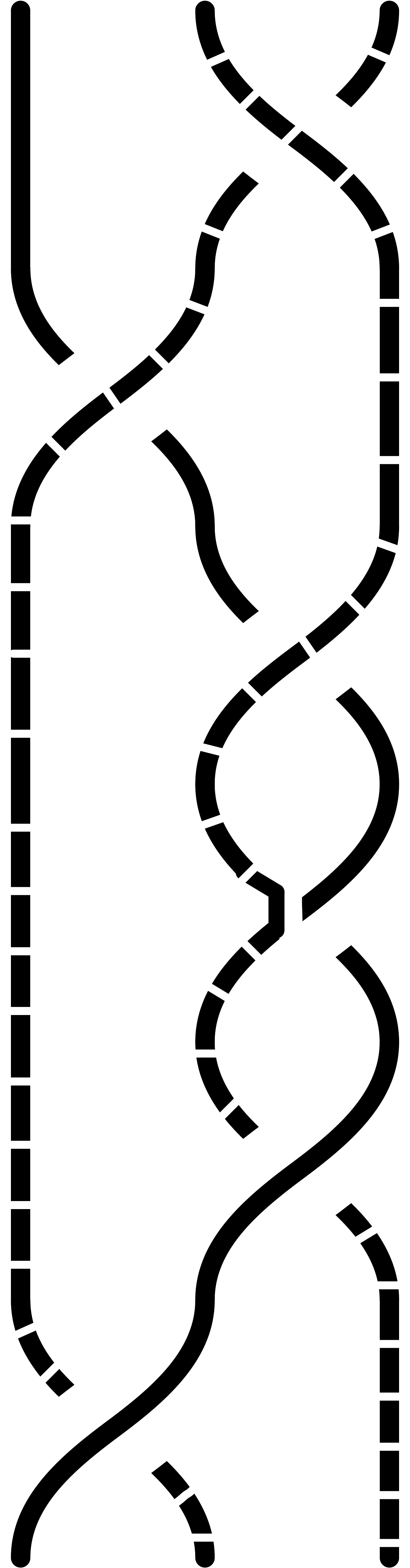} 
\vspace{-1mm}
\caption{\small The simple walks $A$ and $B$ shown as arcs with zebra stripes for the braid $\si_2^{-1} \si_1 \si_2^3 \si_1$ with closure the knot $5_2$.}\label{figure-2}
\end{figure}

\begin{example} \label{ex-5_2} 
The braid word $\si_2^{-1}\si_1 \si_2^3\si_1$ represents the knot $5_2$ and has two simple walks with $J \subseteq \{2,3\}$. They are $A = qa_{1,-}c_{3,+}a_{4,+}b_{5,+}$ and $B = q^3b_{1,-}c_{1,-}c_{2,+}c_{3,+}a_{4,+}b_{5,+}b_{6,+}$ (see \Cref{figure-2}). Notice that $A$ has $J=\{3\}$ and $B$ has $J=\{2,3\}$. Since the walks $A$ and $B$ both traverse the third strand at top and bottom,  stacks only need to be considered up to level $N-1$. 

Using \Cref{Huynh-Le1}, we can write the colored Jones polynomial as the following:
\begin{eqnarray*}
J_{N,K}(q) &=& q^{(1-N)} \sum_{n=0}^{N-1} \cE_{N}((A + B)^n), \\
&=& q^{(1-N)} \sum_{n=0}^{N-1} \cE_{N} ((qa_{1,-}c_{3,+}a_{4,+}b_{5,+} + q^3b_{1,-}c_{1,-}c_{2,+}c_{3,+}a_{4,+}b_{5,+}b_{6,+} )^n).  
\end{eqnarray*}

We will expand the above expression using the $q$-binomial theorem. For that purpose, we introduce the Gaussian binomial coefficients (or $q$-binomial coefficients), which are defined by  
\[ \binom{n}{k}_{\!\! q} = \prod_{i=0}^{k-1} \left( \frac{1-q^{n-i}}{1-q^{i+1}}\right). \]
The expansion of the above expression includes a sum of products of $A$'s and $B$'s, which can be interpreted as stacks. 

To apply \Cref{Huynh-Le2}, the weights at each crossing must be placed into the order $b^s c^r a^d$, and it is preferable to expand $(A+B)^n$ as a sum of terms of the form $B^k A^{n-k}$. Since the local weights at different crossings commute, the only potential issue with non-commutativity of $A$ and $B$ is at the first crossing. Since $a_{1,-}b_{1,-}c_{1,-} = qb_{1,-}c_{1,-}a_{1,-}$, we have $AB = qBA$, so we can adjust for inversions using the $q$-binomial coefficient:
\[(A + B)^n = \sum_{k=0}^n \binom{n}{k}_{\!\! q} B^kA^{n-k}.  \] 

Next we use \Cref{Huynh-Le2} to apply $\cE_{N}(\cdot)$ to evaluate each stack. First, these walks have weights $b_{i,+}$ indexed alone at the fifth and sixth crossings, which evaluates to $1$. Additionally, the $c_{i,\pm}$ weights at the first and second crossings in walk $B$ always cancel out since $\cE_{N}(c_{1,-}) = q^{-(N-1)}$ and $\cE_{N}(c_{2,+}) = q^{N-1}$. We still have $c_{3,+}$ in each walk $A$ and walk $B$. Therefore, for each walk in a stack, the term $A$ or $B$, $q^{N-1}$ is contributed. Similarly, the weight $a_{4,+}$ appears in both walks, so a stack consisting of $n$ walks will contribute $\prod_{i=1}^n (1-q^{N-i})$. Meanwhile, the weight $a_{1,-}$ is only in walk $A$, so the stack $B^k A^{n-k}$ will contribute $\prod_{i=1}^{n-k} (1-q^{n+i-N})$. Additionally, we need to adjust for the correct order of $b_{1,-}$ and $c_{1,-}$ in the weights in products containing $B^k$. The number of times the relation is applied increases quadratically with the exponent of $B$. That is, for a weight containing $B^k$, its contribution is $q^{k^2-k}$. Finally, the remaining powers of $q$ arise from the existing variables in the simple walks.

Applying \Cref{Huynh-Le1}, the colored Jones polynomial of $5_2$ can be written in closed form as
\[ J_{N,K}(q) = q^{N-1} \sum_{n=0}^{N-1}\sum_{k=0}^n \binom{n}{k}_{\!\! q} q^{nN+k(k+1)} \prod_{i=1}^n \left(1-q^{N-i}\right) \prod_{i=1}^{n-k} \left(1-q^{n+i-N}\right).  \]
\hfill $\Diamond$ \end{example}

\begin{figure}[ht]
\centering
\includegraphics[scale=0.08]{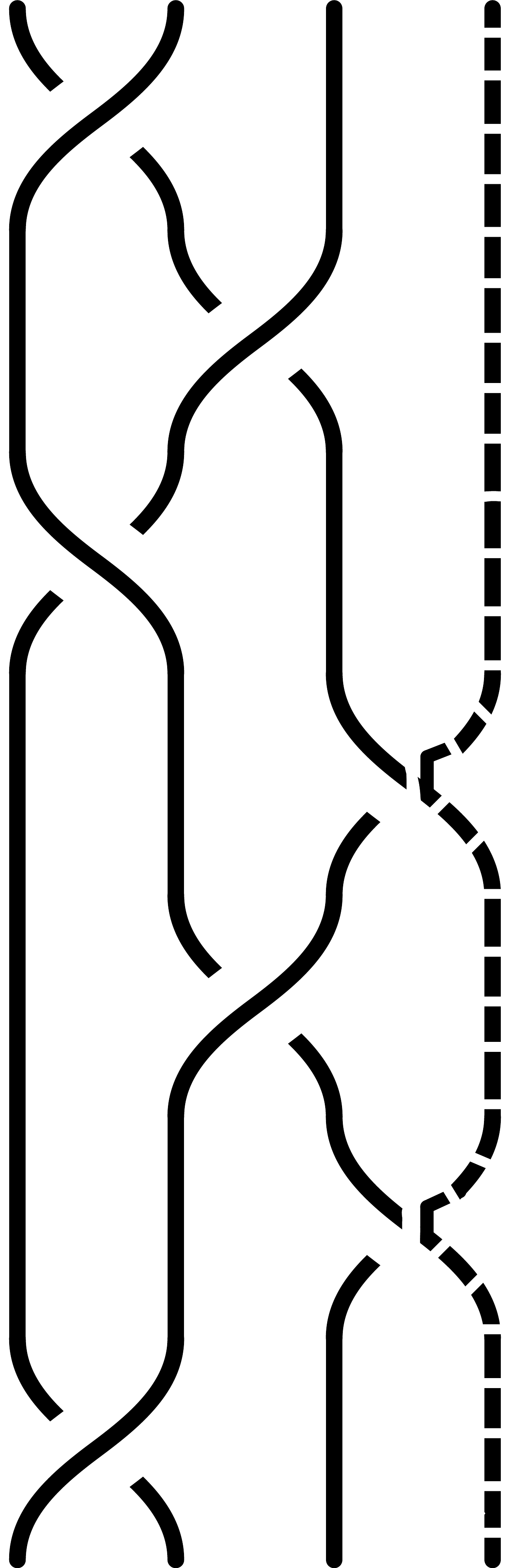}\hspace{1cm} \includegraphics[scale=0.08]{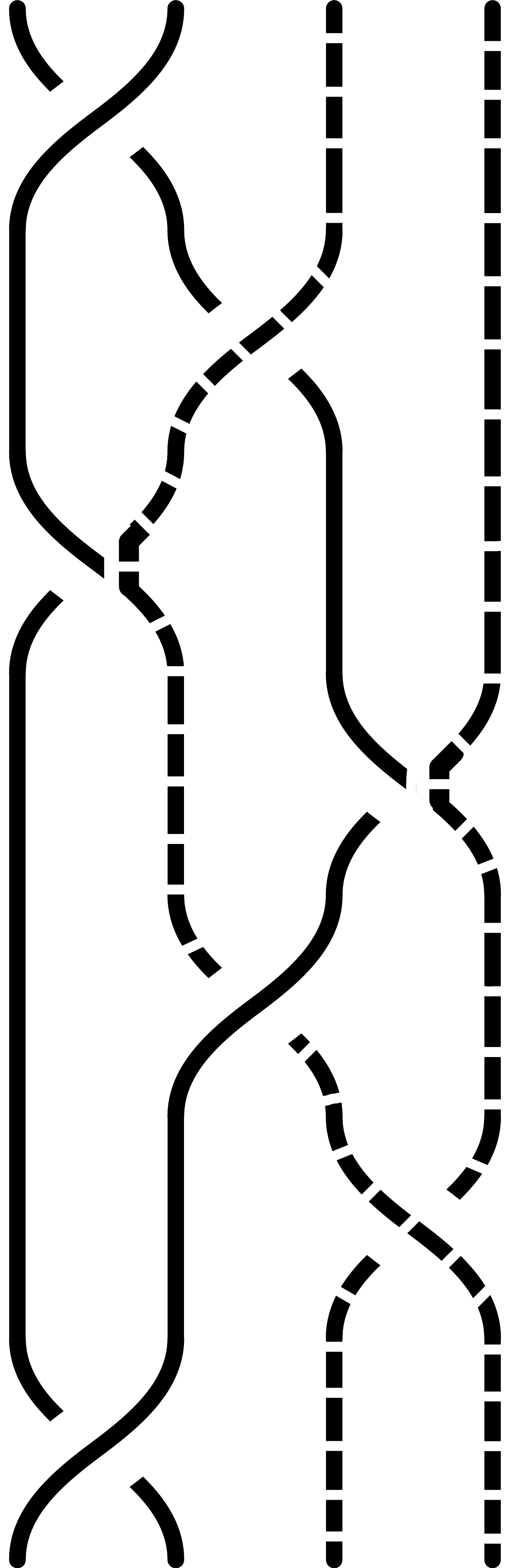}\hspace{1cm} \includegraphics[scale=0.08]{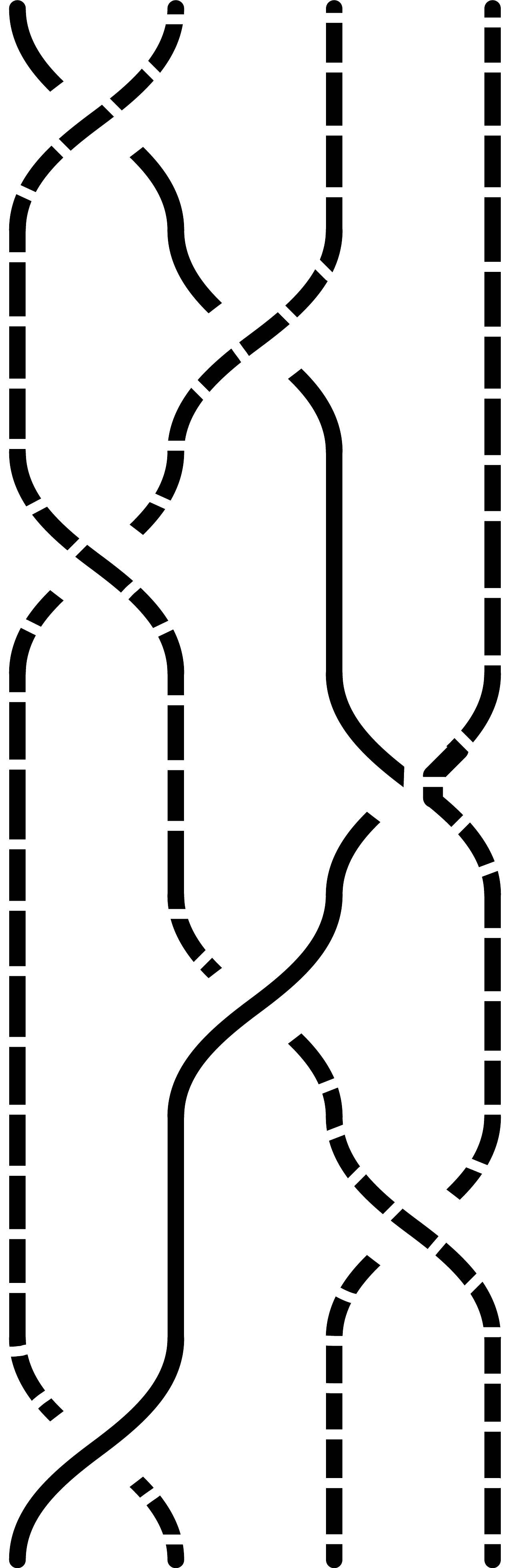} 
\vspace{-1mm}
\caption{\small The simple walks $A, B$ and $C$ shown as arcs with zebra stripes for the braid $\si_1\si_2{\si_1}^{-1}{\si_3}^{-1}\si_2{\si_3}^{-1}\si_1$ with closure the knot $6_1$.} \label{figure-3}
\end{figure}

\begin{example} \label{ex-6_1}
The braid word $\si_1\si_2{\si_1}^{-1}{\si_3}^{-1}\si_2{\si_3}^{-1}\si_1$ represents the knot $6_1$ and has three simple walks 
with $J \subseteq \{2,3,4\}$. They are  $A = qa_{4,-}a_{6,-}$, $B = q^3c_{2,+}a_{3,-}a_{4,-}b_{5,+}b_{6,-}c_{6,-}$ and $C = q^5c_{1,+}c_{2,+}b_{3,-}c_{3,-}a_{4,-}b_{5,+}b_{6,-}c_{6,-}b_{7,+}$ (see \Cref{figure-3}). Notice that $A$ has $J=\{4\},$ $B$ has $J=\{3,4\}$, and $C$ has $J=\{2,3,4\}$. Since the walks $A,B$ and $C$ all traverse the fourth strand at top and bottom,  stacks only need to be considered up to level $N-1$.

Using \Cref{Huynh-Le1}, the colored Jones polynomial for $6_1$ can be written as  
\begin{eqnarray*}
J_{N,K}(q) &=&  q^{(1-N)} \sum_{n=0}^{N-1} \cE_{N} \left((A + B + C)^n \right), \\
&=& q^{(1-N)} \sum_{n=0}^{N-1} \cE_{N} \left((qa_{4,-}a_{6,-} + q^3c_{2,+}a_{3,-}a_{4,-}b_{5,+}b_{6,-}c_{6,-}  \right.\\
&& \hspace{3cm} \left. + \; q^5c_{1,+}c_{2,+}b_{3,-}c_{3,-}a_{4,-}b_{5,+}b_{6,-}c_{6,-}b_{7,+})^n\right). 
\end{eqnarray*}

We use the $q$-multinomial theorem to expand the terms $(A+B+C)^n$, and to that end we recall the definition of the $q$-multinomial coefficients. 

\begin{figure}[ht]
\centering
\includegraphics[scale=0.08]{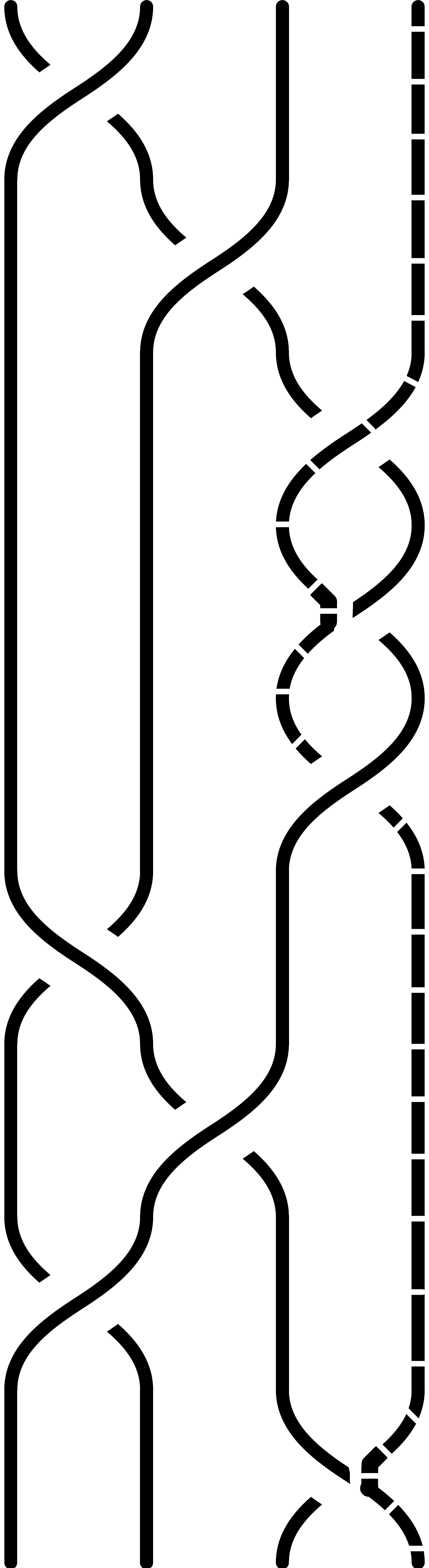}\hspace{1cm} \includegraphics[scale=0.08]{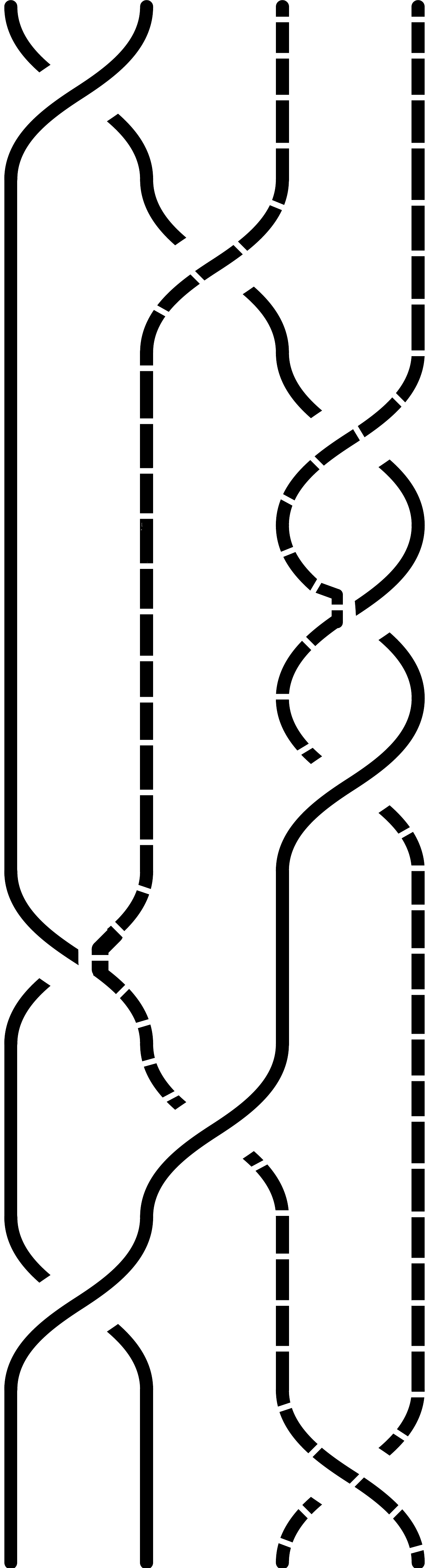}\hspace{1cm} \includegraphics[scale=0.08]{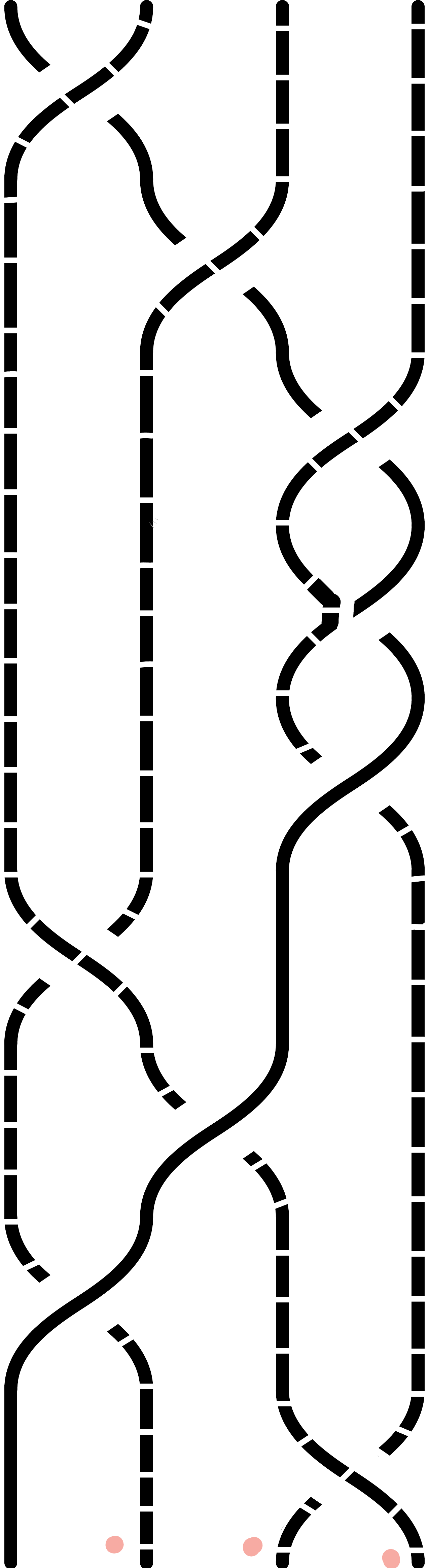} 
\vspace{-1mm}
\caption{\small The simple walks $A, B$ and $C$ as arcs with zebra stripes on the braid $\si_1\si_2 \si_3^3 \si_1^{-1}\si_2\si_1 \si_3^{-1}$ with closure the knot $7_2$.}
\label{figure-4}
 \end{figure}

Given an integer $r \geq 1$ and sequence of nonnegative integers $m_1,m_2,\dots,m_r$ such that $n = m_1+\cdots +m_r$, let
\begin{equation}\label{q-multinomial-coeff}
\binom{n}{m_1,m_2, \dots ,m_r}_{\!\! q} = \frac{[n]_q!}{[m_1]_q![m_2]_q!\dots[m_r]_q!}, 
\end{equation}
where $[n]_q = q^{n-1}+ \dots +q+1$ and $[n]_q! = [1]_q \cdots [n]_q$.  The term $\binom{n}{m_1,m_2, \dots ,m_r}_{\!\! q}$ is called the Gaussian multinomial coefficient or $q$-multinomial coefficient.

With the weights indexed at the third and sixth crossings, the natural order of the walks is $C,B,A$, as this will allow use of \Cref{Huynh-Le2} with only a few other adjustments. The use of the $q$-multinomial coefficient is suitable for this order because inversions with respect to the alphabet $C,B,A$ are reversed with multiplication by $q$. That is, $AB = qBA$, $AC = qCA$ and $BC = qCB$. The expansion of the trinomial takes the form 

\begin{equation}\label{eq-trinomial}
(A + B + C)^n = \sum_{m=0}^n \sum_{k=0}^{m} \binom{n}{n-m,m-k,k}_{\!\! q} C^{n-m}B^{m-k}A^k,  
\end{equation}
where $\binom{n}{n-m,m-k,k}_{\!\! q}$ denotes the $q$-trinomial coefficient of \cref{q-multinomial-coeff}.

We can now expand the trinomial for any power $n$ and use \Cref{Huynh-Le2} to apply $\cE_{N}(\cdot)$ and evaluate each stack. The $b_{\pm}$ and $c_{\pm}$ weights are evaluated the same as in \Cref{ex-5_2}. The weight $a_{4,-}$ appears in each of $A,B,C$, so for every term $C^{n-m}B^{m-k}A^k$, it will contribute $\prod_{i=1}^n (1-q^{i-N})$. Meanwhile, the weight $a_{3,-}$ only appears in the walk $B$, so for the term $C^{n-m}B^{m-k}A^k$, it contributes $\prod_{i=1}^{m-k} (1-q^{n-m+i-N})$. Similarly, the weight $a_{6,-}$ only appears in the walk $A$, so for the term $C^{n-m}B^{m-k}A^k$, it contributes $\prod_{i=1}^{m-k} (1-q^{n-k+i-N})$. 

Finally, we need to adjust for the correct order of the terms $b_{6,-}$ and $c_{6,-}$ in the products containing $C^{n-m}B^{m-k}$. The number of times the relation is applied increases quadratically with the sum of the exponents of $B$ and $C$, which is $(n-m)+(m-k)=n-k$. That is, for the term $C^{n-m}B^{m-k}A^k$, applying the relation introduces a factor of $q^{(n-k)^2-(n-k)}$. We follow the same logic for products $C^{n-m}$ to adjust for the order of the weights $b_{3,-}$ and $c_{3,-}$ at the third crossing.

Applying \Cref{Huynh-Le1}, the colored Jones polynomial of $6_1$ can be written in closed form as
\begin{multline*}
J_{K,N}(q) = q^{1-N} \sum_{n=0}^{N-1}\sum_{m=0}^n\sum_{k=0}^{m} \binom{n}{n-m,m-k,k}_{\!\! q} q^{3n-k-m+(n-k)^2+(n-m)^2} \\ 
\times \prod_{i=1}^n (1-q^{i-N}) \prod_{i=1}^{m-k} (1-q^{n-m+i-N}) \prod_{i=1}^k (1-q^{n-k+i-N}).
\end{multline*}
\hfill $\Diamond$
\end{example}

\begin{example} \label{ex-7_2} 
The braid word $\si_1\si_2 \si_3^3 \si_1^{-1}\si_2\si_1 \si_3^{-1}$ represents the knot $7_2$ and has three simple walks with $J \subseteq \{2,3,4\}$. They are $A = q c_{3,+} a_{4,+} b_{5,+} a_{9,-}$, $B = q^3 c_{2,+}c_{3,-}a_{4,+}b_{5,+}a_{6,-}b_{7,+} b_{9,-} c_{9,-} $ and $C = q^5c_{2,+}c_{3,+}a_{4,+}b_{5,+} b_{6,-}c_{6,-}b_{7,+} b_{8,-} b_{9,-} c_{9,-}$ (see \Cref{figure-4}). Notice that $A$ has $J=\{4\},$ $B$ has $J=\{3,4\}$, and $C$ has $J=\{2,3,4\}$. Since the walks $A,B$ and $C$ all traverse the fourth strand at top and bottom,  stacks only need to be considered up to $N-1$.

We use \Cref{Huynh-Le1} to write the colored Jones polynomial for $7_2$ as:
\begin{eqnarray*}
J_{N,K}(q) &=&  q^{(1-N)} \sum_{n=0}^{N-1} \cE_{N} \left((A + B + C)^n \right), \\
&=& q^{(1-N)} \sum_{n=0}^{N-1} \cE_{N} \left((qc_{3,+} a_{4,+} b_{5,+} a_{9,-} + q^3c_{2,+}c_{3,-}a_{4,+}b_{5,+}a_{6,-}b_{7,+} b_{9,-} c_{9,-} \right.\\
&& \hspace{3cm} \left. + \; q^5c_{2,+}c_{3,+}a_{4,+}b_{5,+} b_{6,-}c_{6,-}b_{7,+} b_{8,-} b_{9,-} c_{9,-})^n\right). 
\end{eqnarray*}

With the weights indexed at the sixth and ninth crossings, the most natural order of the walks is $C,B,A$. Note that $AB = qBA$, $AC = qCA$ and $BC = qCB$. Therefore, the expansion of the trinomial is as given in \cref{eq-trinomial} above.

We expand the trinomial for all powers of $n$ and use \Cref{Huynh-Le2} to apply $\cE_{N}(\cdot)$ and evaluate each stack. The $b_{\pm}$ and $c_{\pm}$ weights are evaluated the same as in Examples \ref{ex-5_2} and \ref{ex-6_1}. The weight $a_{4,+}$ appears in each of $A,B,C$, so for every term $C^{n-m}B^{m-k}A^k$, it will contribute $\prod_{i=1}^n (1-q^{N-i})$. Meanwhile, the weight $a_{6,-}$ only appears in the walk $B$, so for the term $C^{n-m}B^{m-k}A^k$, it contributes $\prod_{i=1}^{m-k} (1-q^{n-m+i-N})$. Similarly, the weight $a_{9,-}$ only appears in the walk $A$, so for the term $C^{n-m}B^{m-k}A^k$, it contributes $\prod_{i=1}^{k} (1-q^{n-k+i-N})$. 

Finally, we need to adjust for the correct order of the terms $b_{9,-}$ and $c_{9,-}$ in the products containing $C^{n-m}B^{m-k}$. The number of times the relation is applied increases quadratically with $(n-m)+(m-k) = n-k.$ That is, for the term $C^{n-m}B^{m-k}A^k$, applying the relation introduces a factor of $q^{(n-k)^2-(n-k)}$. We follow the same logic for products $C^{n-m}$ to adjust for the weights at the sixth crossing.

Applying \Cref{Huynh-Le1}, the colored Jones polynomial of $7_2$ can be written in closed form as
\begin{multline*}
J_{K,N}(q) = q^{N-1} \sum_{n=0}^{N-1}\sum_{m=0}^n\sum_{k=0}^{m} \binom{n}{n-m,m-k,k}_{\!\! q} q^{nN+2n-m-k+(n-k)^2 +(n-m)^2} \\ 
\times \prod_{i=1}^n (1-q^{N-i}) \prod_{i=1}^{m-k} (1-q^{n-m+i-N}) \prod_{i=1}^k (1-q^{n-k+i-N}).
\end{multline*}\hfill $\Diamond$
\end{example}

For hyperbolic knots, the Volume Conjecture asserts that   
\[ \lim_{N \to \infty} \frac{\log |J_{N,K}(e^{2 \pi i/N})|}{N} = \frac{\text{Vol}(S^3 \sm K)}{2 \pi}.\]
For more information about this important open problem, see the book \cite{Murakami-Yokota}. The knots in Examples \ref{ex-5_2}, \ref{ex-6_1}, and \ref{ex-7_2} are all hyperbolic, and the Volume Conjecture has been verified for each of them.   For $5_2$, this was proved by Ohtsuki \cite{Ohtsuki-2016};  for $6_1,$ it was proved by Ohtsuki and Yokota \cite{Ohtsuki-Yokota}; and for $7_2$, it follows from Ohtsuki's work on 7-crossing hyperbolic knots \cite{Ohtsuki-2017}.
It would be interesting to use the formulas given here to independently verify the Volume Conjecture for these knots.


\subsection{Representing knots as braids} \label{S4} 
\setcounter{section}{4} \setcounter{theorem}{0} 
One of the objectives in this paper is to find braid representations of knots that minimize the number of simple walks. To do that, we will apply several different operations that alter the braid word without changing its representative knot. These operations include reflection, rotation, and cyclic permutation of the braid word, and each of them can be used to reduce the number of simple walks. We begin with a review of some standard material on representing knots as braids (see also \cite{Birman}).

\vspace{-1.5cm}
 \begin{figure}[ht]
  {
	\begin{minipage}[c][1\width]{
	   0.35\textwidth}
	   \centering
	  \quad  \includegraphics[scale=0.95]{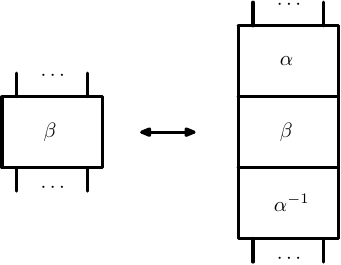}
	   \caption*{\small Conjugation}
	\end{minipage}}
 \hfill 	
  {
	\begin{minipage}[c][1\width]{
	   0.55\textwidth}
	   \centering
	   \includegraphics[scale=0.95]{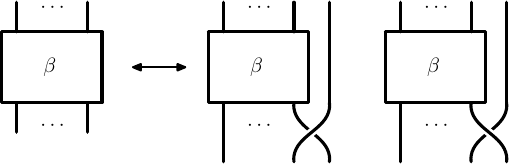} \quad
	   \caption*{\small Stabilization}
	\end{minipage}}
 \vspace{-2.00cm}
 \caption{\small The Markov moves.} \label{figure-1}
\end{figure}

Given a braid $\be$, its closure is denoted $\wh{\be}$ and is the knot or link obtained by connecting the strands on top with the corresponding strands on bottom without introducing any additional crossings. 

The next result is called Alexander's theorem and was first proved in 1923, see \cite{Alexander-1923}.
\begin{theorem} 
Every oriented  knot or link is equivalent to the closure $\wh{\be}$ for some braid $\be \in B_m$.
\end{theorem}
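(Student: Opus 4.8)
The plan is to use Alexander's original braiding algorithm, which takes an arbitrary diagram of an oriented link $L$ and modifies it by planar isotopy and local moves until it is manifestly the closure of a braid. First I would fix a generic diagram $D$ of $L$ in the plane and choose a point $O$ (the braid axis) not lying on $D$. Introducing polar coordinates centered at $O$, I would call an arc of $D$ \emph{positive} if its angular coordinate increases monotonically as the arc is traversed in the direction of its orientation, and \emph{negative} otherwise. The central observation is that if every arc of $D$ is positive, then $D$ already presents $L$ as a closed braid: cutting the plane along a ray emanating from $O$ and reading off the crossings in angular order yields a braid word $\be$ whose closure $\wh{\be}$ is $L$, with the number of strands $m$ equal to the number of times $D$ meets a generic ray from $O$.

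The heart of the argument is a reduction that removes negativity. After subdividing, I may assume each arc is either wholly positive or wholly negative. Given a negative arc $\al$, I would replace it by an arc that travels the ``long way around'' the axis: push $\al$ radially outward and across $O$, so that the offending clockwise stretch is traded for counterclockwise stretches on the far side of $O$. Since this replacement is realized by planar isotopies together with moves that slide a strand across the region containing $O$ (each of which is a Reidemeister move, or a composition thereof, carried out in $S^2$), the link type is unchanged. The effect is to strictly decrease the number of negative arcs, so iterating the procedure terminates with an all-positive diagram, which by the observation above is a closed braid.

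The main obstacle, and the point requiring the most care, is the termination and well-definedness of the pushing move: when a negative arc is thrown across $O$, one must ensure that the new arcs introduced are genuinely positive and that no new negative arcs are created elsewhere, since the pushed strand may now cross several other strands. I would handle this by taking the complexity measure to be the number of negative arcs, subdividing finely enough that each push removes exactly one negative arc without disturbing the others, and checking that the crossings created by the push lie within a single angular sector, so that they can be absorbed directly into the braid word. An alternative that avoids some of this bookkeeping is the Seifert-circle approach of Yamada and Vogel: apply Seifert's algorithm, and repeatedly perform Vogel's reducing move on any pair of incoherently oriented Seifert circles until all circles are coherently nested, at which stage the diagram is again a closed braid; there the main obstacle shifts to showing that Vogel's move strictly reduces a suitable complexity, such as the number of incoherent pairs, which guarantees termination.
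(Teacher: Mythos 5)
The paper does not prove this statement at all: it is quoted as Alexander's classical theorem and attributed to \cite{Alexander-1923}, with \cite{Birman} serving as the standing reference for this background material, so there is no in-paper argument to compare yours against. Your sketch is the standard Alexander braiding argument and is correct in outline: choose an axis $O$, declare arcs positive or negative according to the sign of the angular derivative, observe that an all-positive diagram read along a cutting ray is a closed braid, and eliminate negative arcs one at a time by throwing them across the axis. The two points that genuinely require care are the ones you flag, and the standard way to handle them is (i) to subdivide until every negative arc contains at most one crossing, so that a single throw can carry that crossing along with it, and (ii) to perform each throw entirely over (or entirely under) all strands it passes, which makes the move an ambient isotopy in $\RR^3$ rather than merely a sequence of planar moves; with those conventions each throw replaces one negative arc by positive arcs and creates no new negative ones, so the count of negative arcs strictly decreases and the process terminates. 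Your alternative via Seifert circles and Vogel's reducing move is also a legitimate and commonly used proof, with the advantage that the complexity (the number of incoherent pairs of Seifert circles, or equivalently the height of the reduced Seifert graph) is easier to control; either route would be an acceptable proof of the theorem the paper leaves to the literature.
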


\begin{definition} 
The \textit{Markov moves} include \textit{conjugation} and \textit{stabilization} (see \Cref{figure-1}). Given a braid $\be \in B_m$, \textbf{conjugation} involves replacing it with $\al \be \al^{-1}$ for some $\al \in B_m$. \textbf{Stabilization} involves replacing $\be$  with either  $\be \si_m$ or $\be \si_m^{-1}$. Note that conjugation preserves the braid width and stabilization increases it by one. 
\end{definition}

The next result is attributed to Markov. For a proof, see  \cite[Theorem 2.3]{Birman}.
\begin{theorem} Two braids have equivalent link closures if and only if they are related by a sequence of Markov moves. 
\end{theorem}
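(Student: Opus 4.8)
The plan is to prove the two implications separately. The forward direction (Markov moves preserve the closure) is routine, and the converse (isotopic closures imply Markov equivalence) carries all the content.

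\textbf{Sufficiency.} First I would verify that if $\be$ and $\be'$ are related by a single Markov move, then $\wh{\be}$ and $\wh{\be'}$ are equivalent links. For conjugation, write $\be' = \al\be\al^{-1}$ with $\al,\be \in B_m$; in the closure the trailing factor $\al^{-1}$ can be slid upward along the $m$ closure arcs until it meets $\al$ at the top, where the two cancel. Geometrically this is an ambient isotopy of the closed braid inside the solid torus, so $\wh{\al\be\al^{-1}}$ is isotopic to $\wh{\be}$. For stabilization, the braid $\be\si_m^{\pm 1}$ with $\be \in B_m$ adds one strand carrying a single crossing; upon closure this produces a kink that is removed by a single Reidemeister~I move, whence $\wh{\be\si_m^{\pm 1}} = \wh{\be}$. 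Composing, any finite sequence of Markov moves preserves the closure up to equivalence.

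\textbf{Necessity.} Now suppose $\wh{\be_1}$ and $\wh{\be_2}$ are equivalent. I would fix a deterministic braiding procedure (for instance the Seifert-circle/Vogel algorithm) that sends a link diagram $D$ to a braid $\beta(D)$ with $\wh{\beta(D)}$ the link of $D$; this is the constructive content of Alexander's theorem, already available to us. The argument then splits into two stages. Stage~(a): any two braids produced by braiding the \emph{same} diagram $D$ under different choices made by the algorithm are related by Markov moves. Stage~(b): if diagrams $D$ and $D'$ differ by a single Reidemeister move or planar isotopy, then $\beta(D)$ and $\beta(D')$ are related by a sequence of Markov moves. Since equivalent links admit a finite chain of diagrams $D_1,\dots,D_k$ with $D_1$ coming from $\wh{\be_1}$, $D_k$ from $\wh{\be_2}$, and consecutive terms differing by a Reidemeister move, chaining stages~(a) and~(b) connects $\be_1$ to $\be_2$ through Markov moves.

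\textbf{Main obstacle.} The difficulty is concentrated in stage~(b), the Reidemeister-to-Markov dictionary. Each of the three Reidemeister moves must be analyzed according to how its local picture is oriented relative to the vertical height function of the braid. The genuinely delicate cases are those in which the move creates or destroys a local extremum with respect to height, forcing a local re-braiding of the affected strands; one must check that this re-braiding alters the braid word only by conjugation and stabilization. The Reidemeister~I move corresponds most directly to a single stabilization, while Reidemeister~II and~III typically require several conjugations together with applications of far commutativity and the Yang--Baxter relation, which are equalities inside $B_m$ and hence trivial Markov moves. Verifying these cases uniformly, independent of strand orientations and of which arc is the overstrand, is where the real work lies; for the full case analysis I would refer to \cite[Theorem~2.3]{Birman}.
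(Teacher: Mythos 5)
The paper does not prove this classical result at all; it simply attributes it to Markov and cites \cite[Theorem~2.3]{Birman}, so there is no in-paper argument to compare against. Your sketch is a correct outline of the standard proof (sufficiency by sliding the conjugator around the closure and removing the stabilization kink by Reidemeister~I; necessity via a deterministic braiding algorithm together with a Reidemeister-to-Markov dictionary), and since you defer the substantive case analysis to the same reference the paper cites, your treatment is consistent with, and somewhat more informative than, the paper's.
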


Let $\be \in B_m$, and let 
\begin{equation} \label{eq:SW}
SW_{\be} =\{W \mid W \text{ is a simple walk on $\be$ with } J \subseteq \{2,\ldots, m\} \}
\end{equation}
be the set of simple walks on the braid $\be$.

Given a knot, we are interested in finding the braid representative that minimizes the number of simple walks. Of course, the set of simple walks depends on the braid representative chosen. In fact, it depends on the braid word since it is not preserved under insertion (or deletion) of $\si_i \si_i^{-1}$ or $\si_i^{-1} \si_i$ into the braid word. This is the analogue, for braids, of the Reidemeister II move. We explain this important point next.

Given a walk on a braid, we say that an arc of the braid is \textbf{active} if it is traversed by a path of the walk.  Similarly, we say that a crossing is \textbf{active} if the walk jumps down from overcrossing arc to undercrossing arc at that crossing. Thus, the active crossings are the ones with the local weight $a_{i,\pm}.$
In Figures \ref{figure-2}, \ref{figure-3}, \ref{figure-4}, the active arcs are depicted with zebra stripes.

Now consider two braid words: $\ga = \al \be$ and $\ga' = \al \si_i \si_i^{-1} \be$. (A similar argument applies to $\ga' = \al \si_i^{-1}\si_i  \be$.) We will show that  $SW_\ga \subseteq SW_{\ga'}$. 

Suppose $W$ is a simple walk on $\ga$. If both strands $i,i+1$ are active or if they are both not active, then $W$ extends in a unique way to a simple walk on $\ga'$. If one of the strands $i,i+1$ is active and the other is not, then $W$ extends to a simple walk on $\ga'$, but possibly in more than one way. This proves the claim, and in particular, we see that the number of simple walks is non-decreasing under an elementary insertion. 

Recall that a braid word is said to be \textbf{reduced} if it does not contain an occurrence of $\si_i \si_i^{-1}$ or $\si_i^{-1} \si_i$. By the above considerations, for any given knot, we can always assume that its braid representative is given by a reduced word.

In a similar way, one can show that the set of simple walks is invariant under far commutativity and the Yang-Baxter relation. For far commutativity, this is straightforward, and we leave the details to the reader. For the Yang-Baxter relation, consider the braid words $\ga = \al \si_i \si_{i+1} \si_i \be$ and $\ga' = \al \si_{i+1} \si_i \si_{i+1} \be$ and  assume the relevant crossings are $j, j+1,$ and $j+2.$

We claim that any simple walk on $\ga$ extends in a unique way to a simple walk on $\ga'$. There are several cases, depending on which of the three crossings $j,j+1,j+2$ are active. If none of the crossings are active, then it extends to a simple walk on $\ga'$. If one of the crossings is active, and if we make the corresponding crossing on $\ga'$ active, and then it extends to a simple walk on $\ga'$. If two of the crossings of $\ga$ are active, then they must be $j$ and $j+2,$ and it extends to a simple walk on $\ga'$ again with $j$ and $j+2$ active crossings. Note that it is not possible for all three crossings to be active. This shows that $|SW_\ga| = |SW_{\ga'}|$ under the Yang-Baxter relation.

One can also apply the Markov moves to a braid and consider their effect on the set of simple walks. For instance, under conjugation, one would expect that the resulting braid word will have a larger set of simple walks. A special case is \textbf{cyclic permutation}, which involves replacing $\be=\si_{i_1}^{\ep_1}\si_{i_2}^{\ep_2} \cdots \si_{i_\ell}^{\ep_\ell}$ with $\be' = \si_{i_2}^{\ep_2} \cdots \si_{i_\ell}^{\ep_\ell} \si_{i_1}^{\ep_1}.$ We will study the effect of  cyclic permutation on the set of simple walks in the next section.

Under stabilization, we will show that the set of simple walks is non-decreasing. Let $\be \in B_m$, $\be' = \be \si_m^{\pm 1} \in B_{m+1}$, and suppose $W$ is a simple walk on $\be$. If $m \not\in J,$ then $W$ extends uniquely to a simple walk on $\be'$. If $m \in J$ and $\be' = \be \si_m,$ then we can extend $W$ to a simple walk on $\be'$ by either making the extra crossing active or by setting $J' = J \cup \{m+1\}$. If $m \in J$ and $\be' = \be \si_m^{-1},$ then we can extend $W$ to a simple walk on $\be'$ with $J' = J \cup \{m+1\}.$ Notice that for $\be' = \be \si_m^{-1}$, we have one additional simple walk that does not come from $\be$, namely the one with $J = \{m+1\}$ and the extra crossing made active.  In particular, it follows that $|SW_\be| \leq |SW_{\be'}|.$

Let $K$ be a knot and suppose $\be \in B_{m+1}$ is a braid representative for $K$ with $m\geq 1.$ If $\be$ is conjugate to a braid of the form $\ga \si_{m}^\pm$ for some braid $\ga \in B_{m},$ then $\be$ is said to be  \textbf{reducible}. The braid $\be$ is said to be \textbf{irreducible} if it is not reducible. 

The next result summarizes our discussion.
\begin{proposition} \label{prop:reduced-irreducible}
If $K$ is a knot, then any braid representative for $K$ that minimizes the number of simple walks can be assumed to be given by a reduced and irreducible braid word.
\end{proposition}

In addition, one can apply symmetry operations to alter the braid word without changing its knot or link closure. We will use these operations to find braid representatives that minimize the number of simple walks. The three operations we will consider are called reflection, rotation, and reversal, and we introduce them next.

For a given braid $\be$, its reflection is denoted $\be^*$ and is obtained by switching all the crossings of $\be$. If $\be$ represents the knot $K$, then $\be^*$ represents its mirror image $K^*$. If $\be = \si_{i_1}^{\ep_1} \cdots \si_{i_\ell}^{\ep_\ell}$, then its reflection is the braid word given by $ \be^* = \si_{i_1}^{-\ep_1} \cdots \si_{i_\ell}^{-\ep_\ell}$ (see \Cref{Fig-braid-symmetry}). 

For the purposes of computing the colored Jones polynomial, one can use either $\be$ or $\be^*$, since the invariants are related by the simple formula $J_{N,K}(q) = J_{N,K^*}(q^{-1}).$ The two braids will have completely different sets of simple walks. In fact, as we shall see in the next section, the simple walks on $\be$ and $\be^*$ are disjoint and complementary to one another. There is an obvious computational advantage to working with the braid having fewer simple walks.


In fact, there are  other symmetries that can be applied to get a new braid representative for a knot (or its mirror image). For example, given a braid word $\be$ representing a knot, if one rotates it  180$^\degree$  in the plane, one obtains a new braid word representing the same knot.  Specifically, if $\be = \si_{i_1}^{\ep_1} \cdots \si_{i_\ell}^{\ep_\ell}$, then the rotated braid word is denoted $ \be^\dag$ and is given by $\be^\dag = \si_{m-i_\ell}^{\ep_\ell} \cdots \si_{m-i_1}^{\ep_1}$  (see \Cref{Fig-braid-symmetry}).

Another example is braid reversal, which is given by reversing the order of the braid word. Again, the new braid represents the same knot. If $\be = \si_{i_1}^{\ep_1} \cdots \si_{i_\ell}^{\ep_\ell}$, then its reversal is denoted $\be^r$ and is given by $\be^r =  \si_{i_\ell}^{\ep_\ell} \cdots \si_{i_1}^{\ep_1}$ (see \Cref{Fig-braid-symmetry}). Notice that $\be^r$ is the braid obtained from $\be$ by rotating it 180$^\degree$ around a horizontal line in the plane.

There is a one-to-one correspondence between the sets of simple walks on $\be$ and $\be^r$.  Under the correspondence, the walks   have the same set of active crossings, and the weights for the over- and undercrossings $(b_{i,\pm}, c_{i,\pm})$ are switched. In fact, as we shall see, a simple walk is completely determined by its set of active crossings, and it follows that $|SW_\be| = |SW_{\be^r}|.$

\begin{figure}[ht]
\centering
\includegraphics[scale=0.6]{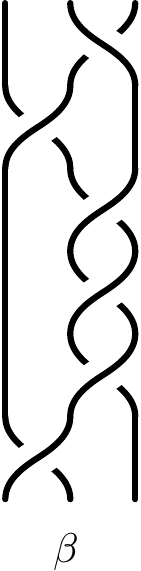} \hspace{1cm}
\includegraphics[scale=0.6]{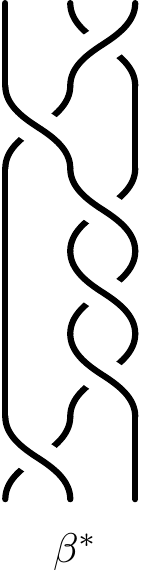} \hspace{1cm}
\includegraphics[scale=0.6]{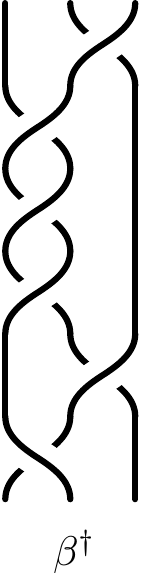} \hspace{1cm}
\includegraphics[scale=0.6]{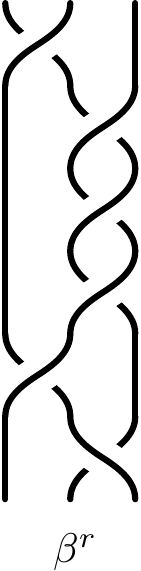}
\caption{\small A braid $\be$ representing the knot $5_2$ and its reflection $\be^*$, rotation $\be^\dag$, and reversal $\be^r$.}
\label{Fig-braid-symmetry}
\end{figure}

Given a braid word for a knot, applying reflection, rotation, or cyclic permutation will alter its set of simple walks. Since the computation of the colored Jones polynomial is exponential in the number of simple walks, it is advantageous to choose the braid representative that minimizes the set of simple walks.


\subsection{Semi-simple walks and cyclic permutation} \label{S5} 
\setcounter{section}{5} \setcounter{theorem}{0} 
The main result in this section is an invariance property which asserts that under the cyclic permutation, the total number of simple walks on a braid $\be$ and its reflection $\be^*$ does not change. To see this, we introduce the notion of semi-simple walks and study their behavior under reflection.   

Recall the definition of $SW_{\be}$ in \cref{eq:SW}. Previously,  we identified walks $W$ with their weights, given by the ordered product of operators $\{a_{i,\pm}, b_{i,\pm}, c_{i,\pm}\}$ for each crossing traversed. However, it will be more convenient to record $W$ using only the operators $a_{i,\pm}$, and we can do so with no loss of information. In the following, we write $a_i$ instead of $a_{i,\pm}$; it is notationally more compact and the sign $\pm$ can be recovered from the braid word. Thus, there is a one-to-one correspondence between simple walks on $\be$ and (certain) monomials in $\{a_1,\ldots, a_\ell\}$, as we shall now explain. 

Given a simple walk $W$, recall that the active crossings are where the walk jumps down from the overcrossing arc to the undercrossing arc. If the $i$-th crossing is active, we record this with $a_i$. As usual, the crossings are labeled $1,2,\ldots, \ell$ from top to bottom of the braid. The collection of active crossings of $W$ determines a monomial in $\{a_1,\ldots, a_\ell\}$, and thus we see that a simple walk determines a monomial. Conversely, the monomial in $\{a_1,\ldots, a_\ell\}$ uniquely determines the simple walk $W$. We will explain this below, but before we do, notice that not every monomial corresponds to a simple walk. For example, the trefoil braid $\si_1^3$ has three crossings and so there are $2^3=8$ possible monomials. However, it has only one simple walk corresponding to the monomial $a_2$.

Suppose then that $a_{i_1}\cdots a_{i_k}$ is a monomial, indicating that the crossings $i_1,\ldots, i_k$ are active. We perform an oriented smoothing at each active crossing. Since the walk jumps down there, the crossing type determines which of the arcs are active and which are not. Specifically, if the crossing is positive, the active arc is the one on the left, and if the crossing is negative, the active arc is the one on the right (see \Cref{figure-5}). At each active crossing, we mark the active arc using some marking scheme. (In all figures, the active arcs have zebra stripes.) We then extend the marking along the arc through any inactive crossings and around the back of the braid closure, continuing again through the braid and around the back as many times as necessary, until reaching another active crossing.

\begin{figure}[ht]
\centering
\includegraphics[scale=0.90]{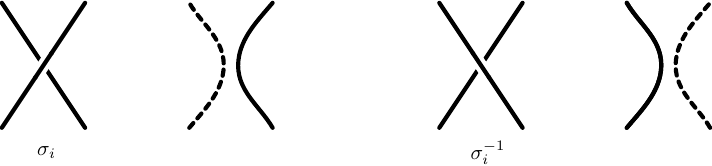}
\vspace{-1mm}
\caption{\small An active crossing and its oriented smoothing for $\si_i$ and $\si_i^{-1}$. The active arc has zebra stripes and the inactive arc is solid}.
\label{figure-5}
 \end{figure}

One of two things will happen at this active crossing. Either the extended marking is on an inactive arc, in which case the monomial does not correspond to a valid simple walk, or it is on the active arc. If the second case holds for all extended arcs, then this is a valid simple walk. Notice that, in that case, every arc of the partial smoothing of the braid closure is connected to either an active or an inactive arc. To see that we argue by contradiction. If the partial smoothing of $\wh{\be}$ contains an arc that is not connected to an active or an inactive arc, then we can follow it along $\wh{\be}$ and it will only pass through inactive crossings before returning to itself. Therefore, it determines a sublink of the closure of $\be$, which contradicts the assumption that the closure of $\be$ is a knot.

Note that, in the second case, the simple walk is determined by the active or marked arcs, and once those are specified we can read the full operator by recording all the crossings it passes through. This explains why the simple walk is determined by its active crossings. Note that, in order for the marked arcs to be a simple walk, the first strand of the braid at the top and bottom must be inactive.

This process can alternatively be understood in terms of taking the partial smoothing of a knot $K$ at a subset of crossings. Given a subset of crossings, we can perform the partial oriented smoothing of $K$ at the selected crossings. In general, this will produce a link. Our assumption is that the resulting link can be partitioned into two sublinks, one containing the active (or marked) arcs the other containing the inactive (or unmarked) arcs.

So, for a given monomial to correspond to a simple walk, it must be the case that the active and inactive arcs of the braid are contained in different components of the partial smoothing. Further, it must be the case that the first strand of the braid (at bottom) is inactive.

We can apply this to understand the behavior of simple walks under taking reflection.  In the mirror image, all the crossings are switched. So, using same monomials to record simple walk on $\be$ and $\be^*$, it follows that taking reflection is equivalent to switching the active and inactive arcs. That is because each positive crossing of $\be$ is negative in $\be^*$ and vice versa. This is illustrated by switching from left to right or vice versa in \Cref{figure-5}. 

The walks on $\be$ and $\be^*$ with the same monomial are \textit{dual}.  We explain this from the point of view of the full set of weights. Let $W$ be a walk on $\be$ with monomial $a_{i_1}\cdots a_{i_k}$, and let $W^*$ be the corresponding walk on $\be^*$ with monomial $a_{i_1}\cdots a_{i_\ell}$. Then $W$ and $W^*$ trace out disjoint arcs of the braid obtained from $\be$ by smoothing the crossings $i_1,\ldots,i_k$. In terms of the weights, the walks $W$ and $W^*$ have the same set of active crossings, but at the inactive crossings, their local weights are opposite. Specifically, if $i$ is an inactive crossing and the local weight $W_{(i)} =b_{i}$, then $W^*_{\; (i)} = c_{i}$. If instead $W_{(i)} =c_{i}$, then $W^*_{\; (i)} = b_{i}$. Likewise, if $W_{(i)} =1$, then  $W^*_{\; (i)} =  b_{i}c_{i}$, and if $W_{(i)} =b_{i}c_{i}$ then $W^*_{\;(i)} =1$. 

\begin{lemma}
The simple walks with $J \subseteq \{2, \ldots, m\}$ on a braid $\be$ and its reflection $\be^*$ are disjoint. In other words, $SW_\be \cap SW_{\be^*} = \varnothing$.
\end{lemma}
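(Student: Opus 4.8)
The plan is to exploit the bijection, established above, between simple walks and the monomials recording their active crossings, so that both $SW_\be$ and $SW_{\be^*}$ are regarded as sets of such monomials. I would then show that a single monomial cannot record a valid simple walk (with $J \subseteq \{2,\ldots,m\}$) on both braids at once.

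First I would fix a monomial $M = a_{i_1}\cdots a_{i_k}$ and perform the oriented smoothing at the active crossings $i_1,\ldots,i_k$. As observed above, the reconnection pattern of this smoothing depends only on the set of active crossings and the orientations, not on the crossing signs; hence $\be$ and $\be^*$ produce the identical smoothed diagram, and in particular the identical partition of the arcs into two classes. What differs is only which class is declared active: since each crossing of $\be^*$ has the opposite sign to the corresponding crossing of $\be$, and the active (red) arc sits on the left at a positive crossing and on the right at a negative one, reflection interchanges the red and black classes while preserving the underlying partition.

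Next I would invoke the constraint $J \subseteq \{2,\ldots,m\}$, which forces the first strand of the braid (at top and bottom) to be inactive, i.e., to lie in the black class. Thus $M \in SW_\be$ requires strand $1$ to be black for $\be$, whereas $M \in SW_{\be^*}$ requires strand $1$ to be black for $\be^*$, which by the color swap means red for $\be$. Since strand $1$ belongs to exactly one of the two classes, these requirements are mutually exclusive, so no monomial lies in both sets and $SW_\be \cap SW_{\be^*} = \varnothing$.

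The only point requiring care is the claim that reflection acts precisely as the red/black swap while leaving the two-class partition unchanged; but this is exactly the content of the discussion preceding the lemma, so the proof reduces to combining that observation with the strand-$1$ condition. I do not anticipate a genuine obstacle beyond the bookkeeping that lets one compare walks on the two different braids $\be$ and $\be^*$ through their common monomials.
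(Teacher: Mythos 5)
Your proposal is correct and follows essentially the same route as the paper's proof: identify a simple walk with its monomial of active crossings, observe that reflection swaps the red and black classes of the common partial smoothing, and conclude that the condition that strand $1$ be inactive cannot hold for both $\be$ and $\be^*$ simultaneously. The only difference is that you spell out more explicitly why the underlying two-class partition is identical for $\be$ and $\be^*$, which the paper leaves to the preceding discussion.
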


\begin{proof} 
Suppose $W$ is a simple walk on $\be$ with monomial $a_{i_1} \cdots a_{i_k}$. Then the partial smoothing of $\be$ at the crossings $i_1,\ldots, i_k$ can be partitioned into active and inactive arcs. Here, we mark the active arcs, with the first strand of the braid at the top inactive. For the same monomial on the mirror image  $\be^*$, the active and inactive arcs will be switched. In particular, the first strand on $\be^*$ will be active at the top and marked as such. Therefore, the monomial $a_{i_1} \cdots a_{i_k}$ will not correspond to a valid simple walk on $\be^*$. It follows that $SW_\be \cap SW_{\be^*} = \varnothing$, and this completes the proof.
\end{proof}

\begin{definition} 
Given a braid word $\be$, we say that a walk $W$ on $\be$ is \textbf{semi-simple} if it is a simple walk on $\be$ or on $\be^*$ with $J \subseteq \{2, \ldots, m\}$. We use $\sS_{\be}$ to denote the set of semi-simple walks on $\be$. Therefore, $\sS_{\be} = SW_{\be} \cup SW_{\be^*}$. 
\end{definition} 
Since $SW_{\be}$ and $SW_{\be^*}$ are disjoint, it follows that $|\sS_{\be}| = |SW_{\be}| + |SW_{\be^*}|,$ where $|S|$ denotes the cardinality of the finite set $S$. 

We leave it as an exercise to show that every monomial $a_i$ for $1\leq i \leq \ell$ corresponds to a simple walk on either $\be$ or $\be^*$. Thus $|\sS_{\be}| \geq n$.   

\begin{theorem} \label{thm-conservation}
The set of semi-simple walks $\sS_{\be}$ is invariant under cyclic permutation of the braid word.
\end{theorem}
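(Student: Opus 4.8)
The plan is to characterize membership in $\sS_{\be}$ by a condition that depends only on the closed-braid diagram $\wh{\be}$ together with the chosen set of active crossings, and which is therefore blind to the point at which the closure is cut open to read off the braid word. Recall that a monomial $a_{i_1}\cdots a_{i_k}$, i.e.\ a subset $S$ of the crossings, determines a \emph{forced} red/black coloring of the partial oriented smoothing of $\wh{\be}$ at $S$: the active arc at each crossing of $S$ is red (the left arc at a positive crossing, the right arc at a negative one), the inactive arc is black, and these colors propagate along the strands. By the connectivity argument given above, for a knot every arc eventually receives a color, so the only obstruction to a well-defined coloring is a \emph{collision}, in which some arc is forced to be both red and black. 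The first key step is the reformulation: a subset $S$ corresponds to an element of $\sS_{\be}$ \emph{if and only if} its forced coloring has no collision, equivalently the smoothing separates into a red sublink and a black sublink. Indeed, if there is no collision then the first strand is monochromatic, since its top and bottom ends are joined by the closure arc and hence lie on one component of the smoothing; if that component is black the coloring is a simple walk on $\be$, and if it is red then, because reflection interchanges the two colors, the same monomial is a simple walk on $\be^*$. Conversely, a collision on $\be$ is also a collision on $\be^*$, as the two colorings differ only by the global swap $\text{red}\leftrightarrow\text{black}$. Thus $\sS_{\be}=SW_{\be}\cup SW_{\be^*}$ is precisely the set of collision-free subsets of crossings, and the asymmetric requirement $J\subseteq\{2,\dots,m\}$ (first strand black) has been absorbed into the choice between $\be$ and $\be^*$.

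With this reformulation in hand the invariance is almost immediate. Cyclic permutation $\be=\si_{i_1}^{\ep_1}\cdots\si_{i_\ell}^{\ep_\ell}\mapsto\be'=\si_{i_2}^{\ep_2}\cdots\si_{i_\ell}^{\ep_\ell}\si_{i_1}^{\ep_1}$ is conjugation by $\si_{i_1}^{\ep_1}$, so $\wh{\be}=\wh{\be'}$; concretely, the diagram of $\wh{\be'}$ is obtained from that of $\wh{\be}$ by sliding the first crossing around the closure from the top of the braid to the bottom. This slide is a planar isotopy of the diagram on $S^2$, and it induces a bijection between the crossings of $\be$ and those of $\be'$ (sending crossing $1$ to crossing $\ell$, and crossing $j$ to crossing $j-1$ for $j\ge 2$) that preserves the sign, the over/under data, and the local left/right structure at each crossing. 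I would then check that under this bijection the partial smoothing of $\be$ at a subset $S$ and the partial smoothing of $\be'$ at the corresponding subset $S'$ yield identical colored diagrams, so that $S$ is collision-free exactly when $S'$ is. By the reformulation, this bijection of crossings restricts to a bijection $\sS_{\be}\to\sS_{\be'}$, giving $|\sS_{\be}|=|\sS_{\be'}|$ and proving the theorem.

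The step I expect to require the most care is the first: justifying that the single intrinsic condition ``no collision'' genuinely captures $\sS_{\be}$ and removes all dependence on the first strand. This rests on two facts already established, namely the reflection duality, which identifies passage to $\be^*$ with the global interchange of the two colors, and the connectivity argument, which guarantees that a collision-free coloring leaves no arc uncolored and that the first strand is therefore monochromatic at top and bottom. Once these are invoked carefully---attending to the fact that the closure arc forces the two ends of the first strand to share a color, and to the degenerate case of the empty monomial---the remaining geometric assertion, that sliding a crossing around the closure is a color-preserving isotopy, is routine, and the counting identity $|\sS_{\be}|=|\sS_{\be'}|$ follows at once.
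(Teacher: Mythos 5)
Your proof is correct, but it takes a genuinely different route from the paper's. The paper deduces the theorem from two lemmas that track individual walks through the cyclic permutation: \Cref{lemma-1} handles braid words not beginning with $\si_1^{\pm1}$ by a case analysis on whether $i_1$ and $i_1+1$ lie in $J$, showing $SW_\be = SW_{\be'}$ outright; \Cref{lemma-2} handles words beginning with $\si_1^{\pm1}$ by showing that the only walks which fail to survive in $SW_{\be'}$ (those with $2\in J$ whose monomial omits $a_1$) reappear as their duals in $SW_{(\be')^*}$, so the union $\sS_\be$ is preserved even though its two pieces may trade members. You instead establish a single intrinsic characterization --- $\sS_\be$ is exactly the set of collision-free subsets of crossings of the closed diagram $\wh{\be}$, the asymmetric condition $J\subseteq\{2,\dots,m\}$ being absorbed into the red/black dichotomy via reflection duality --- and then observe that cyclic permutation is a sign-, orientation-, and smoothing-preserving isotopy of $\wh{\be}$. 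Your reformulation does the bookkeeping once and makes the invariance transparent; it also yields the paper's subsequent rotation proposition essentially for free, since rotation likewise preserves the closed diagram with its crossing data. What it gives up is the finer information in \Cref{lemma-1}, namely that $SW_\be$ itself (not merely $\sS_\be$) is unchanged unless the word begins with $\si_1^{\pm1}$, a refinement the paper later uses to normalize minimizing braid words to start with $\si_1$. The two points you flag do deserve the care you promise --- the closure arc forcing the first strand to be monochromatic is exactly where the hypothesis that $\wh{\be}$ is a knot rather than a link enters, and the empty monomial is harmless since whatever convention one adopts applies to $\be$ and $\be'$ alike --- but with those handled as you indicate, the argument is complete.
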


The theorem is a direct consequence of the next two lemmas. The first lemma implies that cyclic permutation of $\be$ does not alter the set of simple walks unless $\be$ starts with $\si_1$ or $\si_1^{-1}.$

\begin{lemma} \label{lemma-1}
Suppose $\be = \si_{i_1}^{\ep_1}\si_{i_2}^{\ep_2} \cdots \si_{i_\ell}^{\ep_\ell}$ is a braid word with $i_1 \neq 1.$ Let $\be' = \si_{i_2}^{\ep_2} \cdots \si_{i_\ell}^{\ep_\ell}\si_{i_1}^{\ep_1}$ be the braid obtained by cyclic permutation. Then $SW_\be =SW_{\be'}$.
\end{lemma}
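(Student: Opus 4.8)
The plan is to exploit the combinatorial characterization of simple walks developed above, whereby a simple walk on a braid is recorded by its set of active crossings and is valid precisely when (i) the partial oriented smoothing at those crossings separates the closed braid into a \emph{red} (active) sublink and a \emph{black} (inactive) sublink, and (ii) the first strand of the braid is colored black. Cyclic permutation relocates only the first crossing $\si_{i_1}^{\ep_1}$ from the top of the braid to the bottom, inducing an obvious bijection between the crossings of $\be$ and those of $\be'$, and hence between their candidate active-crossing sets. I will show that this bijection carries valid simple walks to valid simple walks.

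First I would observe that cyclic permutation is realized geometrically by conjugation: $\be'$ is conjugate to $\be$ by $\si_{i_1}^{\ep_1}$, so the closures $\wh\be$ and $\wh{\be'}$ are related by the planar isotopy that slides the crossing $\si_{i_1}^{\ep_1}$ off the top of the braid and around the closure to the bottom. The key point, and the place where the hypothesis $i_1 \neq 1$ enters, is that this crossing involves only the strands in positions $i_1, i_1+1 \geq 2$; hence the sliding isotopy can be performed entirely to the right of the leftmost strand and its closure arc, so that it fixes a neighborhood of the first strand pointwise. Because crossing signs, and therefore the local smoothing types, are intrinsic and unaffected by the relocation, the isotopy carries the partial smoothing of $\wh\be$ at any set $S$ of crossings to the partial smoothing of $\wh{\be'}$ at the corresponding set, preserving the red/black coloring.

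With the isotopy in hand, the two validity conditions transfer directly. Condition (i) is a property of the smoothed closed link together with its red/black coloring, and since the isotopy identifies these for $\be$ and $\be'$, the red and black arcs are separated for $S$ on $\be$ if and only if they are separated for the corresponding set on $\be'$. Condition (ii) transfers because the isotopy fixes the first strand: the leftmost arc at the top, and equivalently at the bottom via the closure, is the same arc before and after, so it is colored black on $\be$ exactly when it is colored black on $\be'$. Consequently a set of active crossings determines a valid simple walk on $\be$ if and only if it determines one on $\be'$, and these walks correspond under the natural identification, giving $SW_\be = SW_{\be'}$.

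The main obstacle I anticipate is making the ``fixes the first strand'' claim airtight, that is, verifying that the conjugating isotopy really can be taken to avoid the leftmost strand and its closure arc. This is exactly the step that fails when $i_1 = 1$: there the relocated crossing touches strand $1$, and sliding it around the closure can change whether strand $1$ is black, which is precisely why the hypothesis $i_1 \neq 1$ is needed. A careful treatment would fix a standard picture of the braid closure with the strands drawn left-to-right and the closure arcs nested to one side, and check that the track of the sliding crossing stays within the region occupied by positions $\geq 2$; once this is established, conditions (i) and (ii) follow as above with only routine checking.
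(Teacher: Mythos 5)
Your proof is correct, but it takes a genuinely different route from the paper's. The paper works directly with the walk: given $W\in SW_\be$ with underlying set $J$, it considers the corresponding walk $W'$ on $\be'$ and splits into three cases according to whether neither, exactly one, or both of the strands $i_1$ and $i_1+1$ lie in $J$; in each case the new underlying set $J'$ (which can differ from $J$ in the middle case) still satisfies $J'\subseteq\{2,\ldots,m\}$ precisely because $i_1\neq 1$ forces $i_1,i_1+1\geq 2$, so $W'\in SW_{\be'}$. You instead route everything through the active-crossing/partial-smoothing characterization and a sliding isotopy of the closure that relocates the crossing $\si_{i_1}^{\ep_1}$ from top to bottom while fixing a neighborhood of strand $1$. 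The paper's argument is more elementary and self-contained; yours is more conceptual, makes the role of the hypothesis transparent (the only thing that can go wrong under cyclic permutation is the color of the first strand), and dovetails naturally with the companion lemma for $i_1=1$, where exactly that failure occurs. The trade-off is that your argument leans on two facts you should be explicit about inheriting: that a simple walk is faithfully encoded by its set of active crossings together with the two validity conditions (the paper asserts this but only sketches the reconstruction), and that the track of the sliding crossing avoids strand $1$ and its closure arc, which you correctly flag as the step needing a fixed picture of the closure. Granting those, your bijection of valid monomials gives $SW_\be=SW_{\be'}$ under the natural relabeling of crossings, which is the same (mild) identification the paper makes.
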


\begin{proof}
Suppose $W\in SW_\be$ is a simple walk on $\be$ with $J \subseteq \{2,\dots, m\}$.  Let $W'$ be the corresponding simple walk on $\be'$, with underlying set $J'$. There are three possible cases, depending on whether $i_1$ and $i_1+1$ lie in $J$. First, if neither $i_1$ nor $i_1+1$ lie in $J$, then cyclic permutation has no effect and $W'$ is a simple walk on $\be'$ with $J' =J$. Second, if exactly one of $i_1, i_1+1$ lies in $J$, then $J'\neq J$, but $W'$ is nevertheless a simple walk on $\be'$ with $J' \subseteq \{2,\dots, m\}$. Third, if both $i_1$ and $i_1+1$ are in $J$, then $J'=J$ and $W$ is a valid simple walk on $\be'$. This completes the proof of the lemma.
\end{proof}

The second lemma studies the effect of cyclic permutation for braids that start with $\si_1$ or $\si_1^{-1}.$ We will show that cyclic permutation of a braid $\be$ has the potential to exchange simple walks between $SW_{\be}$ and $SW_{\be^*}$, but it does not alter the set of semi-simple walks.

\begin{lemma} \label{lemma-2}
Suppose $\be = \si_{i_1}^{\ep_1}\si_{i_2}^{\ep_2} \cdots \si_{i_\ell}^{\ep_\ell}$ is a braid word with $i_1=1.$ Let $\be' = \si_{i_2}^{\ep_2} \cdots \si_{i_\ell}^{\ep_\ell}\si_{i_1}^{\ep_1}$ be the braid obtained by cyclic permutation. Then $\sS_\be =\sS_{\be'}$.
\end{lemma}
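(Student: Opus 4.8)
The plan is to avoid tracking paths crossing-by-crossing and instead work entirely with the monomial description and the partial-smoothing characterization of simple walks developed above. Recall that a monomial $a_{i_1}\cdots a_{i_k}$ records a choice of active crossings, and that performing the oriented smoothing at those crossings colors every arc of $\wh\be$ either red (active) or black (inactive), by the local rule that the active arc lies on the left at a positive crossing and on the right at a negative crossing. By the discussion preceding the disjointness $SW_\be \cap SW_{\be^*} = \varnothing$ established above, such a monomial lies in $SW_\be$ exactly when the red and black arcs do not collide and the first strand is black, and it lies in $SW_{\be^*}$ exactly when they do not collide and the first strand is red. Since $\sS_\be = SW_\be \cup SW_{\be^*}$ and every arc receives a color whenever there is no collision, the first-strand condition becomes vacuous: a monomial represents a semi-simple walk on $\be$ if and only if the partial oriented smoothing of $\wh\be$ at its active crossings separates the red arcs from the black arcs into distinct components. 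This is the key reduction, and it is exactly where the hypothesis $i_1=1$ enters conceptually, since when the displaced crossing meets the first strand the cyclic permutation can flip that strand between red and black, which is precisely the transfer of a walk between $SW_\be$ and $SW_{\be^*}$ that $\sS_\be$ is designed to absorb.

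With this characterization in hand, invariance under cyclic permutation is nearly formal. First I would note that passing from $\be$ to $\be'$ relabels the crossings by the cyclic shift $1 \mapsto \ell$ and $j \mapsto j-1$ for $j \geq 2$, and that the crossing signs are unchanged; hence this shift induces a bijection on monomials that respects the local red-on-left, black-on-right rule at every crossing. Next I would observe that the closures $\wh\be$ and $\wh{\be'}$ are related by a planar isotopy: sliding the initial factor $\si_{i_1}^{\ep_1}$ around the closure arcs from the top of the braid to the bottom carries one diagram to the other without creating or destroying crossings. Applying this isotopy to a partially smoothed diagram carries the smoothing of $\wh\be$ at a monomial $M$ to the smoothing of $\wh{\be'}$ at the shifted monomial $M'$, matching red and black arcs throughout.

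Finally I would conclude by invoking that the collision versus non-collision alternative, equivalently whether the red and black arcs lie in distinct components of the partial smoothing, is a topological property of the smoothed closed diagram and is therefore preserved under planar isotopy. Consequently $M$ separates red from black on $\wh\be$ if and only if $M'$ does so on $\wh{\be'}$, so the crossing-shift bijection restricts to a bijection $\sS_\be \to \sS_{\be'}$; under the natural relabeling this is the asserted equality $\sS_\be = \sS_{\be'}$, which together with \Cref{lemma-1} yields \Cref{thm-conservation}.

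The main obstacle I anticipate is the reduction step rather than the invariance step: one must argue cleanly that, for a fixed valid monomial, the color of the first strand is the unique datum distinguishing membership in $SW_\be$ from membership in $SW_{\be^*}$, so that taking the union genuinely removes the only cyclic-permutation-sensitive constraint. A secondary point requiring care is checking that the planar isotopy moving $\si_{i_1}^{\ep_1}$ around the closure interacts correctly with an active (smoothed) first crossing as well as with an inactive one; in both cases the move is a planar isotopy of the partially smoothed diagram, but this should be verified so that no component is inadvertently merged or split. Should the conceptual argument prove delicate to write rigorously, a fallback is the direct case analysis used for \Cref{lemma-1}: track a simple walk $W$ through the permutation according to which of the strands $1$ and $2$ are active at the first crossing, and verify in each case that $W$ yields a semi-simple walk on $\be'$, possibly switching between $\be'$ and its reflection.
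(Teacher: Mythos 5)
Your argument is correct, but it takes a genuinely different route from the paper's. The paper proves the lemma walk-by-walk: for $W \in SW_\be$ it splits into cases according to whether $2 \in J$ and whether the monomial of $W$ contains $a_1$, and checks that the corresponding walk $W'$ on $\be'$ either stays in $SW_{\be'}$ or, in the one problematic case ($2 \in J$ and $a_1$ absent, so that $1 \in J'$), that its dual $(W')^*$ lands in $SW_{(\be')^*}$ --- which is exactly the transfer between $SW_\be$ and $SW_{\be^*}$ that your ``first strand flips color'' observation captures. You instead package the whole lemma into two facts: (i) by the reflection/duality discussion, a monomial lies in $\sS_\be$ if and only if its partial oriented smoothing has no red/black collision, the color of the first strand being precisely the datum that sorts valid monomials into $SW_\be$ versus $SW_{\be^*}$; and (ii) the no-collision property is preserved under the planar isotopy sliding $\si_{i_1}^{\ep_1}$ around the closure. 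What this buys is uniformity: it dispenses with the case analysis, gives both inclusions at once via a bijection on monomials, and in fact never uses the hypothesis $i_1 = 1$, so it proves $\sS_\be = \sS_{\be'}$ for an arbitrary cyclic permutation, subsuming \Cref{lemma-1} at the level of semi-simple walks and yielding \Cref{thm-conservation} in one stroke. The cost is that it leans harder on the smoothing characterization being an ``if and only if'' and on the statement that, for a collision-free monomial, reflection exactly swaps the two colors (so that ``first strand red'' is equivalent to membership in $SW_{\be^*}$); both are asserted in the paper's preceding discussion but only informally, and your flagged ``main obstacle'' is the right place to spend the care --- your proposed fallback case analysis is essentially the paper's actual proof.
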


\begin{proof}
Suppose $W\in SW_\be$ is a simple walk on $\be$ with $J \subseteq \{2,\ldots, m\}$. Let $W'$ be the corresponding simple walk on $\be'$, with underlying set $J'$. There are two possible cases, depending on whether or not $J$ contains $2.$ If $2 \not\in J,$ then  $J'=J$ and so $W' \in SW_{\be'}$. Similarly, if $2 \in J$ and the monomial for $W$ contains $a_1$ (in which case $\be$ necessarily begins with $\si_1^{-1}$), then again $J'=J$ and $W' \in SW_{\be'}$. However, if $2 \in J$ and the monomial for $W$ does not contain $a_1$, then $1 \in J'$ and so $W' \not\in SW_{\be'}$. However, the dual walk $(W')^*$ is simple walk on $(\be')^*$ with $(J')^* \subseteq \{2,\ldots, m\}$, and hence $(W')^* \in SW_{(be')^*}$. In particular, it follows that the set $\sS_\be = SW_{\be} \cup SW_{\be^*}$ of semi-simple walks is unchanged by cyclic permutation. This completes the proof of the lemma.
\end{proof}

The next result states that, up to reordering the crossings, the set of semi-simple walks on a braid word and its rotation are equal.

\begin{proposition} 
Let $\be  = \si_{i_1}^{\ep_1} \cdots \si_{i_\ell}^{\ep_\ell}$ be a braid word on $m$ strands, and let $\be^\dag =\si_{m-i_1}^{\ep_1} \cdots \si_{m-i_\ell}^{\ep_\ell}$ be its rotation. Then the sets of semi-simple walks of $\be$ and $\be^\dag$ are equal, namely $\sS_{\be} = \sS_{\be^\dag}.$
\end{proposition}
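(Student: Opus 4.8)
The plan is to work through the correspondence, developed in the paragraphs preceding the lemma above, between semi-simple walks and subsets of crossings. A monomial $a_{i_1}\cdots a_{i_k}$, equivalently a subset $S$ of the crossings, represents an element of $\sS_\be$ precisely when the partial oriented smoothing of $\wh{\be}$ at the crossings of $S$ admits a consistent two-coloring, with each component of the resulting link colored entirely red or entirely black according to the local sign rule (red on the left at a positive crossing, red on the right at a negative crossing). The decisive point is that this condition is \emph{color-blind}: since $\sS_\be = SW_\be \cup SW_{\be^*}$, a consistent coloring contributes to $\sS_\be$ whether the first strand comes out black (yielding a walk in $SW_\be$) or red (yielding, after interchanging the colors on $\be^*$, a walk in $SW_{\be^*}$). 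Thus the first step is to record that $\sS_\be$ is in bijection with the subsets $S$ of crossings whose oriented smoothing separates $\wh{\be}$ into monochromatic components, with no constraint on the color of any particular strand.

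Next I would realize the rotation geometrically. Rotating the braid diagram by $180^\circ$ in the plane is an orientation-preserving homeomorphism carrying the diagram of $\wh{\be}$ to that of $\wh{\be^\dag}$; it reverses the top-to-bottom order of the crossings and sends $\si_i^{\ep}$ to $\si_{m-i}^{\ep}$, which is exactly the correspondence of crossings ``up to reordering'' referred to in the statement. I would then verify three facts. First, a $180^\circ$ rotation reverses the orientation of every strand, and the oriented smoothing of a crossing is unchanged under a global reversal of orientations, so the smoothings correspond. Second, reversing both strand orientations at a crossing preserves its sign, so positive crossings map to positive and negative to negative. Third, the rotation interchanges left and right. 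Combining the last two facts, the local red/black assignment is \emph{swapped} at every crossing, so a subset $S$ produces a consistent coloring for $\be$ if and only if the corresponding subset produces a consistent coloring for $\be^\dag$, the sole difference being a global interchange of the two colors. Since monochromaticity of components is manifestly invariant under globally swapping the colors, the crossing bijection restricts to a bijection $\sS_\be \to \sS_{\be^\dag}$, which is the asserted equality $\sS_\be = \sS_{\be^\dag}$.

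The hard part will be the bookkeeping around the distinguished first strand. The constraint $J \subseteq \{2,\dots,m\}$ singles out strand $1$, yet the rotation sends strand $1$ to strand $m$, so this condition does not transport literally under the homeomorphism. I expect to dispose of it precisely through the color-blindness isolated in the first step: for a semi-simple walk there is no restriction on the color of the first strand, as both colors are admissible, one landing in $SW_\be$ and the other in $SW_{\be^*}$. It is worth recording the supporting observation that, because $\wh{\be}$ is a knot, the top and bottom ends of strand $1$ are joined by a crossing-free closure arc and therefore always lie in the same component of every smoothing, so they necessarily receive the same color. This guarantees both that the first-strand requirement is automatically met and that the color swap induced by the rotation never carries a semi-simple walk outside $\sS$; with that in hand the bijection of the second step is unobstructed and the proof closes.
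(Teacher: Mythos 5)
Your proof is correct and follows essentially the same route as the paper's: both identify semi-simple walks with the subsets of crossings whose partial oriented smoothing admits a consistent red/black coloring --- a condition blind to the color of the first strand --- and observe that the $180^\circ$ rotation carries consistent colorings of $\wh{\be}$ to consistent colorings of $\wh{\be^\dag}$. Your write-up is in fact more explicit than the paper's on the one point it glosses over, namely that the rotation preserves crossing signs and oriented smoothings while interchanging left and right, so the local coloring rule is applied with the two colors globally swapped --- which is precisely why $SW_\be$ itself need not be preserved even though $\sS_\be$ is.
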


\begin{proof}
The braid rotation $\be^\dag$ is obtained from $\be$ by rotating it 180$^\degree$ in the plane. In order to relate the semi-simple walks on $\beta$ and $\beta^\dag$, we index the crossings of $\be^\dag$ from top to bottom using $n,\ldots, 1$, and we identify semi-simple walks on $\be$ and $\be^\dag$ with the subsets of active crossings. In this way, every semi-simple walk on $\be$ and $\be^\dag$ corresponds to a monomial $a_{i_1}\ldots a_{i_k}$ indicating that $i_1,\ldots, i_k$ are the active crossings.

Given a monomial $a_{i_1}\ldots a_{i_k}$, the semi-simple walk on $\be$ is obtained by taking the smoothing of $\be$ at each crossing $i_1,\ldots, i_k$ and locally marking the active and inactive arcs using a marking scheme that differentiates them. (We mark the active arcs using zebra stripes.) Then extend the markings around the braid closure. Since $a_{i_1}\ldots a_{i_k}$ corresponds to a semi-simple walk, the markings on the active and inactive arcs will not coincide.

The same will be true for $\be^\dag$, provided one follows the same procedures at the corresponding crossings. Since $\be^\dag$ is obtained  by a 180$^\degree$ rotation which interchanges the first and last strands of the braid, this will not preserve $SW_\be$ since the new walk may not satisfy  $J \subseteq \{2,\ldots, m\}.$ Nevertheless, the semi-simple walks of $\be$ and $\be^\dag$ are preserved. This completes the proof.
\end{proof}


\subsection{Simple walks on $\mathbf{(2,n)}$ torus braids} \label{S6}  
\setcounter{section}{6} \setcounter{theorem}{0} 
In this section, we show that the number of simple walks on the braids $\be_n$ with closure the $(2,n)$ torus links are given by the Fibonacci-like sequence $1, 2, 3, 5, 8, 13, 21, 34, 55, 89, 144, \ldots$. We will find a closed form solution for the number of simple walks and use it to show they grow exponentially in $n$.

For $n\geq 1$ let $\be_n = \si_1^{-n}$. The closure of $\be_n$ is the $(2,n)$ torus knot if $n$ is odd and the $(2,n)$ torus  link if $n$ is even. For that reason, we refer to $\be_n$ as the (negative) $(2,n)$ torus braid.

\begin{proposition}
Let $f(n)$ be the number of simple walks on the $(2,n)$ torus braid $\be_n$.
Then   
\[f(n)= \left(\,\frac{5+\sqrt{5}}{10}\right)\,\left(\,\frac{1+\sqrt{5}}{2}\right)^n+\left(\,\frac{5-\sqrt{5}}{10}\right)\,\left(\,\frac{1-\sqrt{5}}{2}\right)^n.\]
\end{proposition}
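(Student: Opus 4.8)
The plan is to prove that $f(n)$ satisfies the Fibonacci recurrence $f(n)=f(n-1)+f(n-2)$ with $f(1)=1$ and $f(2)=2$, and then to extract the closed form by the standard characteristic-root method. The bulk of the work lies in establishing the recurrence, and I would do this by translating the enumeration of simple walks on $\be_n=\si_1^{-n}$ into a two-state coloring problem, exactly as in the monomial/smoothing correspondence developed above. Recall that a simple walk is recorded by its set of active crossings, equivalently a monomial in $a_1,\ldots,a_n$, and is recovered by smoothing at the active crossings and two-coloring the arcs red (active) and black (inactive). Since $\be_n$ has width $2$, each horizontal region between consecutive crossings contains exactly a left and a right arc, and I would track the state of each region by which of the two arcs is red. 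Labeling the regions $0,1,\ldots,n$ from top to bottom (region $0$ above crossing $1$, region $i$ between crossings $i$ and $i+1$, region $n$ at the bottom), the braid closure identifies region $n$ with region $0$.

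The heart of the argument is to read off how the colors propagate through each crossing, using the rule that at a negative crossing the active (red) arc is the one on the right (see \Cref{figure-5}). At an inactive crossing the two strands merely swap position, so the left/right colors are interchanged from one region to the next. At an active crossing the oriented smoothing is the identity tangle and forces the right arc to be red and the left arc black in both adjacent regions; in particular, such a crossing is admissible only if the region directly above it already carries red on the right, and it then leaves that coloring unchanged. The requirement that the first (left) strand be black at top and bottom, together with the closure identification, forces region $0$ and region $n$ both to have red on the right. Thus $f(n)$ equals the number of admissible active/inactive choices for the $n$ crossings compatible with these transitions and the fixed boundary state.

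With the transition rules in hand, I would obtain the recurrence by conditioning on the top crossing. If crossing $1$ is active, region $1$ inherits the boundary state and crossings $2,\ldots,n$ reproduce the same problem for $\be_{n-1}$, contributing $f(n-1)$. If crossing $1$ is inactive, the swap places red on the left in region $1$, so crossing $2$ cannot be active (its admissibility requires red on the right above it) and must also be inactive; this restores the boundary state in region $2$ and leaves the problem for $\be_{n-2}$, contributing $f(n-2)$. Hence $f(n)=f(n-1)+f(n-2)$. A direct check gives $f(1)=1$ (the walk $a_1$) and $f(2)=2$ (the walks $a_1a_2$ and the empty monomial, the latter a genuine simple walk with $J=\{2\}$ because the closure of $\be_2$ is a two-component link). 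Solving $x^2=x+1$ gives roots $\tfrac{1\pm\sqrt5}{2}$, and matching the initial values $f(0)=1$, $f(1)=1$ yields the coefficients $\tfrac{5\pm\sqrt5}{10}$, which is precisely the stated formula.

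The step I expect to be the main obstacle is the justification of the transition rules: namely, that an active crossing is admissible exactly when the region above it already carries red on the right, and that the red and black arcs never collide except when this condition fails. This is where the oriented-smoothing bookkeeping of \Cref{figure-5} must be handled with care, since the coloring at a smoothed crossing is forced both above and below and so imposes a genuine compatibility constraint rather than a free choice. A secondary subtlety is the even-$n$ case, where one must confirm that the empty monomial really does define a valid simple walk on the torus link, so that the base case $f(2)=2$ is correct and the recurrence is seeded properly.
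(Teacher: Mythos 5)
Your argument is correct and follows essentially the same route as the paper: both establish the Fibonacci recurrence $f(n)=f(n-1)+f(n-2)$ by conditioning on the crossings at one end of the braid (you work at the top via the active-crossing coloring states; the paper extends walks at the bottom by appending $a_n$ or $b_{n-1}c_n$) and then solve it with the initial values $f(1)=1$, $f(2)=2$ by the characteristic-root method. The transition-rule bookkeeping you flag as the main obstacle is precisely the case analysis the paper performs directly on the walk weights, so there is no substantive gap.
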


\begin{proof}
The first step is to show that the simple walks on $\be_n$ satisfy the Fibonacci recurrence relation
\begin{equation} \label{recurrence}
f(n) = f(n-1) + f(n-2). 
\end{equation} 

To do this, we establish a bijective correspondence between the set of simple walks on $\be_n$ and the union of the sets of simple walks on $\be_{n-1}$ and $\be_{n-2}$. This is accomplished by extending simple walks on $\be_{n-1}$ and $\be_{n-2}$ to simple walks on $\be_n$. 

In the following, we identify simple walks with their weights, which we write as monomials in $\{a_i,b_i,c_i \mid i=1, \ldots, n \}.$ Notice that all simple walks under consideration will have $J =\{2\}.$

Given a simple walk $w'$ on $\be_{n-1}$,  set $w=w' a_n$. Then $w$ is a simple walk on $\be_n$. See \Cref{Fig-n_2} (left). Since $J = \{2\}$, this is actually the only  way  to extend $w'$ to a simple walk on $\be_n$ .

Similarly, given a simple walk $w'$ on $\be_{n-2}$, set $w =w'b_{n-1} c_n$. Then $w$ is again a simple walk on $\be_n$. See \Cref{Fig-n_2} (right). This is the only  way  to extend $w'$ to a simple walk on $\be_n$ which avoids simple walks extended from $\be_{n-1}.$

\begin{figure}[ht]
\centering
\includegraphics[scale=0.18]{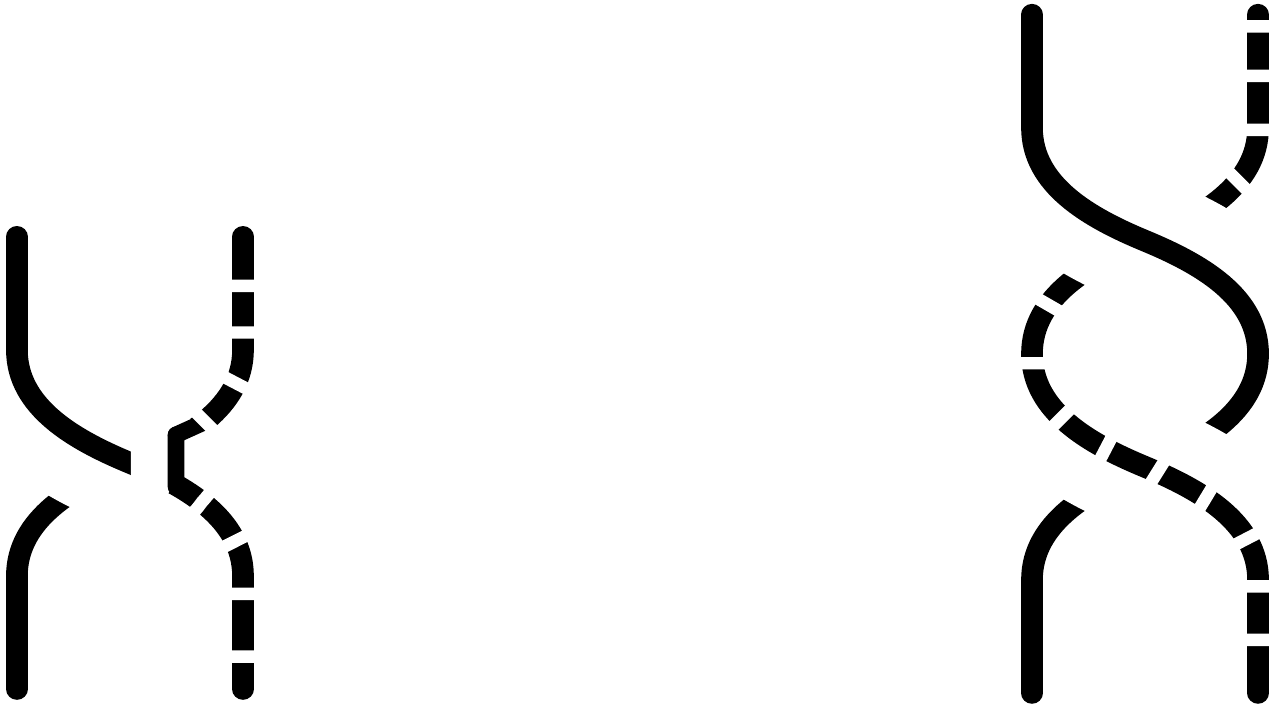} 
\caption{\small Extending simple walks from $\be_{n-1}$ and $\be_{n-2}$ to $\be_n$.}
\label{Fig-n_2}
\end{figure}

The two sets of simple walks are disjoint. This can be verified by noting that they  traverse different strands between the $(n-1)$-st and $n$-th crossings. Equivalently,  one can see this by comparing their weights at the $n$-th crossing. The simple walks extended from $\be_{n-1}$ have weight $a_n$, whereas those extended from $\be_{n-2}$ have weight $c_n$.

Every simple walk on $\be_n$ is an extension of one on $\be_{n-1}$ or $\be_{n-2}$. To that end, let $w$ be a simple walk on $\be_n$. Since $J=\{2\}$, at the $n$-th crossing, either the walk jumps down and has weight $a_n$, or it stays on the overstrand and has weight $c_n$. In the first case,  $w=w' a_n$ for a simple walk $w'$ on $\be_{n-1}.$ In the second, $w = w' b_{n-1} c_n$ for some simple walk $w'$ on $\be_{n-2}.$ This establishes the bijective correspondence, and \Cref{recurrence} follows directly.

The second step is to solve the recurrence relation \eqref{recurrence}. It is  a homogeneous linear recurrence relation with constant coefficients and characteristic polynomial
\[ p(t) = t^n - t^{n-1} - t^{n-2} = t^{n-2}(t^2-t-1).\]
This polynomial has two non-zero roots: \[ t = \frac{1 \pm \sqrt{5}}{2}. \] 
Therefore, its general solution is given by $f(n) =c_1(\frac{1 + \sqrt{5}}{2})^n + c_2(\frac{1 - \sqrt{5}}{2})^n$. Using the values $f(1)=1$ an $f(2)=2,$ it follows that
\begin{eqnarray*}
1&=&c_1\left(\frac{1 + \sqrt{5}}{2}\right) + c_2\left(\frac{1 - \sqrt{5}}{2}\right), \\
2&=&c_1\left(\frac{1 + \sqrt{5}}{2}\right)^2 + c_2\left(\frac{1 - \sqrt{5}}{2}\right)^2.
\end{eqnarray*}
Solving for $c_1,c_2$, we find that
\[ c_1 = \frac{5+\sqrt{5}}{10} \quad \text{ and } \quad c_2 =\frac{5-\sqrt{5}}{10}.\]
The formula for $f(n)$ follows, and this completes the proof.
\end{proof}


\subsection{Simple walks on $\mathbf{(3,n)}$ torus braids} \label{S7}  
\setcounter{section}{7} \setcounter{theorem}{0} 
In this section, we show that the number of simple walks on the braids $\be_n$ with closure the $(3,n)$ torus link are given by the sequence $0, 1, 4, 5, 10, 19, 34, 63, 116, 213, 392, 721, 1326,\dots$. We will find a closed form solution for the number of simple walks and use it to show they grow exponentially in $n$.

For $n\geq 1$, let $\ga_n = (\si_1^{-1} \si_2^{-1})^n$. The closure of $\ga_n$ is the $(3,n)$ torus knot if $n \not\equiv 0$ mod 3 and the $(3,n)$ torus link if $n \equiv 0$ mod 3. For that reason, we refer to $\ga_n$ as the (negative) $(3,n)$ torus braid.

\begin{proposition}
Let $g(n)$ be the number of simple walks on the $(3,n)$ torus braid $\ga_n$. Then   
\[g(n)= c_1\al^n + c_2\be^n + c_3 \ga^n,\]
where $\al,\be,\ga$ are the roots of $t^3-t^2-t-1$ (see \cref{eqn-roots} for explicit formulas for the roots) and where
 \[ c_1 = \frac{1+3\al^{-1}}{-\al^2 + 4\al -1}, \quad c_2 = \frac{1+3\be^{-1}}{-\be^2 + 4\be -1}, \quad c_3 = \frac{1+3\ga^{-1}}{-\ga^2 + 4\ga -1}.   \]
\end{proposition}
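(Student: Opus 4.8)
The plan is to follow the two-step strategy used above for the $(2,n)$ torus braids: first establish that $g(n)$ satisfies the tribonacci recurrence
\[ g(n) = g(n-1) + g(n-2) + g(n-3), \]
and then solve this linear recurrence in closed form. The characteristic polynomial of this recurrence is precisely $t^3 - t^2 - t - 1$, so once the recurrence and enough initial values are in hand, the roots $\al, \be, \ga$ and the coefficients $c_1, c_2, c_3$ are determined by routine linear algebra.

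For the recurrence, I would mimic the bijective argument given for $\be_n = \si_1^{-n}$. Write $\ga_n = (\si_1^{-1}\si_2^{-1})^n$ as a concatenation of $n$ identical blocks $\si_1^{-1}\si_2^{-1}$, and label the crossings $1, \ldots, 2n$ from top to bottom, so the final block consists of crossings $2n-1$ and $2n$. As before I would identify each simple walk with its monomial in the weights $\{a_i, b_i, c_i\}$ and construct a bijection between $SW_{\ga_n}$ and the disjoint union $SW_{\ga_{n-1}} \sqcup SW_{\ga_{n-2}} \sqcup SW_{\ga_{n-3}}$. Concretely, I would define three extension maps that append a fixed suffix of weights (recording the walk's behavior through the last one, two, or three blocks) to a simple walk on the correspondingly smaller braid, check that each produces a valid simple walk on $\ga_n$ with $J \subseteq \{2,3\}$, show the three images are disjoint by comparing the local weights at the bottom crossings (just as the $(2,n)$ walks were separated by whether their weight at the $n$-th crossing was $a_n$ or $c_n$), and finally show that every simple walk on $\ga_n$ falls into exactly one of the three families by examining how it traverses the last block.

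The main obstacle is this case analysis. On three strands with two crossings per block, a walk has several possible local configurations (recorded by which of the strands $2,3$ are active and by which arcs are colored red), and the bookkeeping is substantially more delicate than in the two-strand case, where $J = \{2\}$ forced a single configuration. In particular one must track the \emph{state} of active strands entering the bottom portion of the braid and verify that the three cases — resolved after removing one, two, or three blocks — are genuinely exhaustive and mutually exclusive, all while respecting the constraints that $J \subseteq \{2,3\}$ and that the first strand is inactive at top and bottom. Conceptually, the depth-three recurrence reflects the fact that the transfer matrix governing how the red/black coloring propagates across one block has characteristic polynomial $t^3 - t^2 - t - 1$; reformulating the count as a transfer-matrix product is an alternative route to the recurrence that may streamline the disjointness and exhaustiveness checks.

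Once the recurrence is established, I would compute the initial values directly by enumerating simple walks on the smallest braids, and then solve. The characteristic polynomial $t^3 - t^2 - t - 1$ has one real root and a complex conjugate pair, obtained explicitly from Cardano's formula after the substitution depressing the cubic; these are the expressions recorded in \cref{eqn-roots}. Writing $g(n) = c_1\al^n + c_2\be^n + c_3\ga^n$, the coefficients are then pinned down by the initial conditions, either by solving the resulting Vandermonde system in $\al, \be, \ga$ or, more cleanly, by a partial-fraction decomposition of the generating function $G(x) = \sum_n g(n)\,x^n$, whose denominator $1 - x - x^2 - x^3$ is the reciprocal polynomial of $p(t) = t^3 - t^2 - t - 1$. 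Either route produces coefficients of the standard residue form $c_i = R(\al_i)/p'(\al_i)$ for a numerator polynomial $R$ fixed by the initial data. Finally, one checks that the stated expressions agree with these: using the defining relation $\al_i^3 = \al_i^2 + \al_i + 1$, the quantity $\tfrac{1 + 3\al_i^{-1}}{-\al_i^2 + 4\al_i - 1}$ simplifies to $\tfrac{\al_i + 3}{3\al_i^2 - 2\al_i - 1} = (\al_i+3)/p'(\al_i)$, completing the proof. This last step is a routine computation; the real content of the argument lies in the combinatorial identification of the recurrence.
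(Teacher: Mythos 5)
Your strategy coincides with the paper's proof: the recurrence $g(n)=g(n-1)+g(n-2)+g(n-3)$ is established there by exactly the bijective extension argument you outline --- explicit extension maps for each of $J=\{2\}$, $J=\{3\}$, $J=\{2,3\}$ at each of the three depths, disjointness read off by comparing weights near the bottom of the braid, and a converse case analysis at the last block showing every simple walk on $\ga_n$ arises from exactly one of the three families --- and the closed form is then obtained by solving the recurrence from initial data (the paper routes this through the identity relating $g$ to the tribonacci numbers $T(n)$, which is equivalent to your Vandermonde/residue computation; your simplification of $\tfrac{1+3\al_i^{-1}}{-\al_i^2+4\al_i-1}$ to $(\al_i+3)/p'(\al_i)$ via $\al_i^3=\al_i^2+\al_i+1$ is correct). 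Two cautions if you execute the plan. First, the nine-case analysis you defer as ``the main obstacle'' is essentially the entire content of the paper's proof, and the extensions are not always pure suffix appends: for instance a walk $w'=w''b_{2n-2}$ on $\ga_{n-1}$ with $J=\{2\}$ extends by \emph{rewriting} its final weight to give $w=w''a_{2n-2}b_{2n}$, so ``append a fixed suffix'' needs that adjustment. Second, be careful with the indexing of the initial values when you do the final check: direct enumeration gives $g(1)=1$ (the sequence $0,1,4,5,10,\ldots$ begins at $n=0$), and it is $g(1)=1$, $g(2)=4$, $g(3)=5$ that reproduce the stated $c_i$; the values $g(1)=0$, $g(2)=1$, $g(3)=4$ quoted in the paper's own proof would shift the closed form by one power of $\al_i$, so your proposal's reliance on computing the initial values yourself rather than quoting them is the right call.
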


\begin{proof}
We claim  that $g(n)$ satisfies the tribonacci recurrence relation:
\begin{equation} \label{recurrence2}
 g(n) = g(n-1) + g(n-2) + g(n-3).   
 \end{equation}
The proof of the claim is long, so we first show how to solve the recurrence relation to get the formula for $g(n).$

The tribonacci numbers $T(n)$ are  the sequence $0, 1, 1, 2, 4, 7, 13, \dots$ for $n \geq 0$, and they also satisfy \eqref{recurrence2}. We will use the closed form solution for $T(n)$  to find a closed form solution for $g(n)$. The recurrence relation \eqref{recurrence2} has characteristic polynomial $p(t) = t^n - t^{n-1} -t^{n-2} - t^{n-3} = t^{n-3}(t^3-t^2-t-1).$ It has one nonzero real root $\al$ and two complex roots $\be$ and $\ga$ given by
\begin{equation}
\begin{split} \label{eqn-roots}
\al &=  \tfrac{1}{3}(1 + \sqrt[3]{19+3\sqrt{33}} + \sqrt[3]{19-3\sqrt{33}}),  \\
\be &= \tfrac{1}{2}(1 - \al + \sqrt{-3\al^2+2\al+5}),  \\
\ga &= \tfrac{1}{2}(1 - \al - \sqrt{-3\al^2+2\al+5}).   \\
\end{split}
\end{equation}
The closed form solution for $T(n)$ is a linear combination of powers of the roots of the characteristic polynomial:

\[  T(n) = \frac{\al^n}{-\al^2 + 4\al -1} + \frac{\be^n}{-\be^2 + 4\be - 1} + \frac{\ga^n}{-\ga^2 + 4\ga - 1}. \]

The sequence $g(n)$ is related to the tribonacci sequence by the equation \[ g(n) =  T(n) + 3T(n-1).   \]

From this we can write $g(n) = c_1\al^n + c_2\be^n + c_3 \ga^n$ and use the values $g(1)=0, g(2)=1, g(3)=4$ to solve for the coefficients $c_1, c_2, c_3$:
\[ c_1 = \frac{1+3\al^{-1}}{-\al^2 + 4\al -1}, \quad c_2 = \frac{1+3\be^{-1}}{-\be^2 + 4\be -1}, \quad c_3 = \frac{1+3\ga^{-1}}{-\ga^2 + 4\ga -1}.   \]

It remains to prove the claim, namely to show that the simple walks on $\ga_n$ satisfy the recurrence relation \eqref{recurrence2} for all $n \geq 4$. To do this, we establish a bijective correspondence between the simple walks on $\ga_n$ and the union  of the sets of simple walks on $\ga_{n-1}$, $\ga_{n-2}$ and $\ga_{n-3}$. In general, for braids on three strands, the simple walks will have $J = \{2\}$, $J = \{3\}$ or $J = \{2,3\}$. For the $(n,3)$ torus braids, all three occur. 

We claim that every simple walk on $\ga_{n-1}$, $\ga_{n-2}$ and $\ga_{n-3}$ can be extended to a simple walk along $\ga_n$. We prove this by considering the three cases separately. As before, we will identify simple walks with their weights, which we write as monomials in $\{a_i,b_i,c_i \mid i=1, \ldots, 2n \}.$

\begin{figure}[ht]
\centering
\includegraphics[scale=0.40]{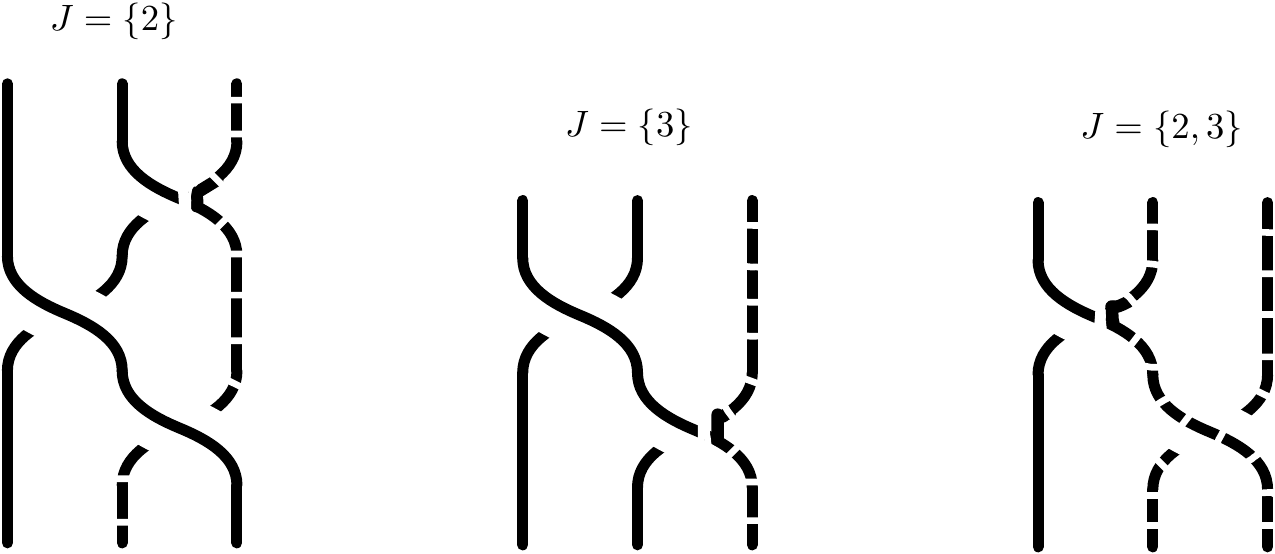} 
\caption{\small Extending simple walks from $\ga_{n-1}$ to $\ga_n$.}
\label{Fig-3n_1}
\end{figure}

Suppose $w'$ is a simple walk on $\ga_{n-1}$. If $J=\{2\}$, then it is on the understrand at the $(2n-2)$-nd crossing, and so $w'=w'' b_{2n-2}$. We set $w = w''a_{2n-2}b_{2n}$ and note that $w$ is a simple walk on $\ga_n$ with $J=\{2\}.$ If $J=\{3\}$, then we set $w=w' a_{2n}.$ If $J=\{2,3\}$, then we set $w=w'  b_{2n}\cdot a_{2n-1}c_{2n}.$  Note that in this last case, the paths become inverted, but this is allowable for the walks with $J = \{2,3\}$. \Cref{Fig-3n_1} shows how the simple walks are extended. In all three cases it is clear that $w$ is a simple walk on $\ga_n$.

\begin{figure}[hb]
\centering
\includegraphics[scale=0.40]{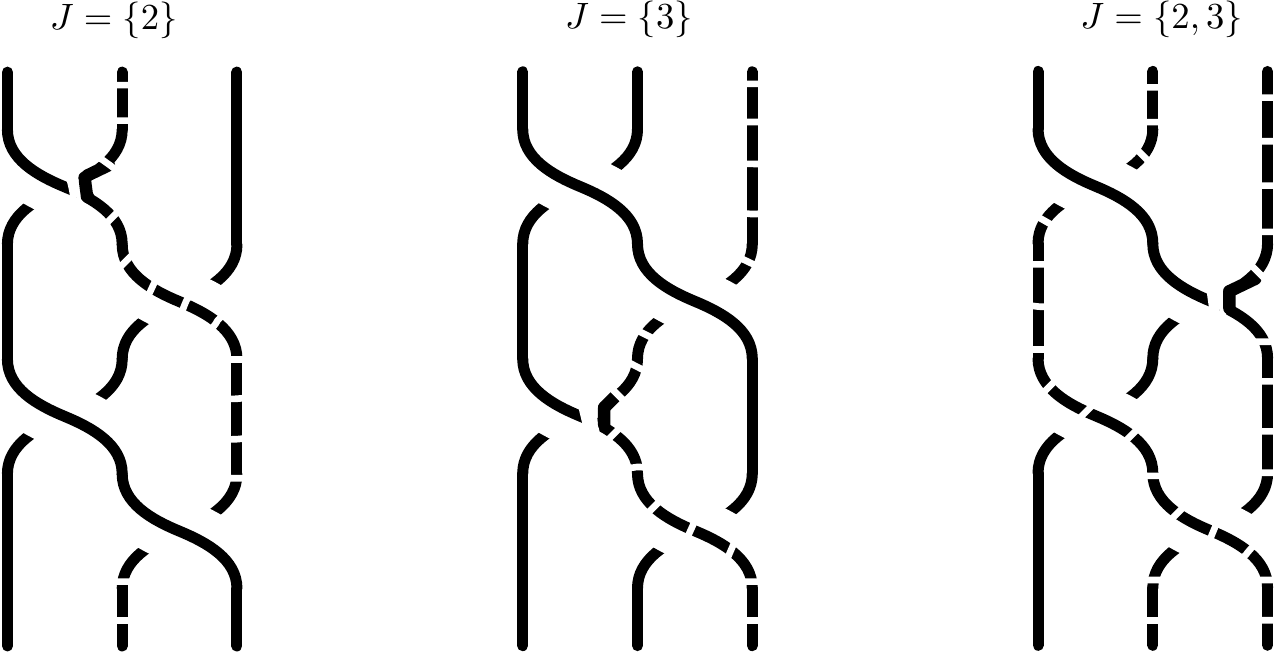} 
\caption{\small Extending simple walks from $\ga_{n-2}$ to $\ga_n$.}
\label{Fig-3n_2}
\end{figure}

In a similar way, we can extend simple walks on $\ga_{n-2}$. Let $w'$ be a simple walk on $\ga_{n-2}$. If $J=\{2\}$, then we set $w=w' a_{2n-3}c_{2n-2}b_{2n}.$ If $J=\{3\}$, then we set $w=w' b_{2n-2}a_{2n-1}c_{2n}.$ If $J=\{2,3\}$, then we set $w=w' a_{2n-2}b_{2n} \cdot b_{2n-3}c_{2n-1}c_{2n} .$  In this last case, notice that the paths become inverted. \Cref{Fig-3n_2} shows how the simple walks are extended. In all three cases, it is clear that $w$ is a simple walk on $\ga_n$.
 
Lastly, let $w'$ be a simple walk on $\ga_{n-3}.$ Then since $\ga_n = \ga_{n-3}(\si_1^{-1}\si_2^{-1})^3$ with $(\si_1^{-1}\si_2^{-1})^3$ inducing the identity permutation, we can extend $w'$ to a simple walk on $\ga_n$ by remaining on the same strands. If $J=\{2\}$, then we set $w=w' b_{2n-5}c_{2n-3} c_{2n-2}b_{2n}.$ If $J=\{3\}$, then we set $w=w' b_{2n-4}b_{2n-3}c_{2n-1}c_{2n}.$ If $J=\{2,3\}$, then we set $w=w' b_{2n-5}c_{2n-3}c_{2n-2}b_{2n}  \cdot b_{2n-4}b_{2n-3}c_{2n-1}c_{2n}.$ \Cref{Fig-3n_3} shows how the simple walks are extended. In all three cases, it is clear that $w$ is a simple walk on $\ga_n$. It is clear that $w$ is a simple walk on $\ga_n$.

\begin{figure}[ht]
\centering
\includegraphics[scale=0.40]{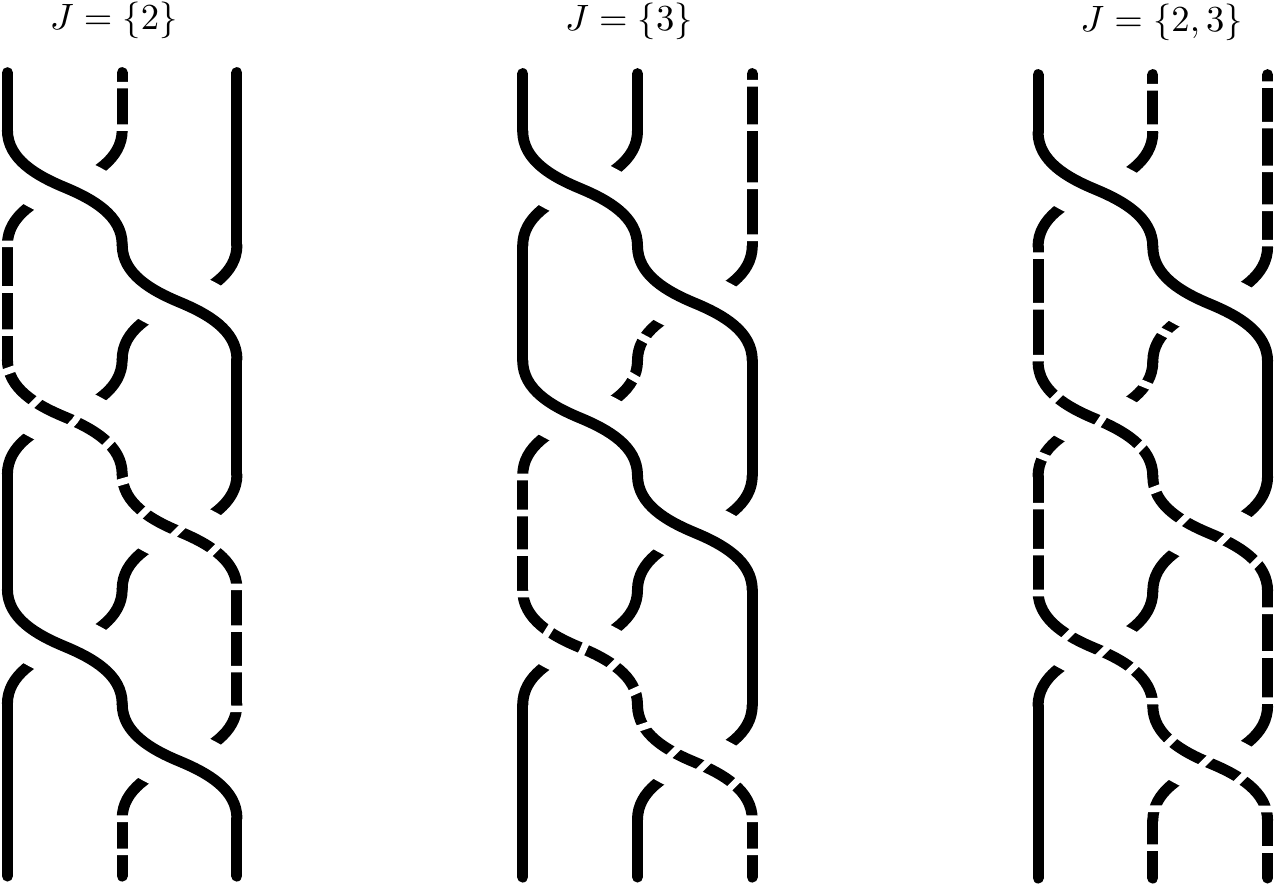} 
\caption{\small Extending simple walks from $\ga_{n-3}$ to $\ga_n$.}
\label{Fig-3n_3}
\end{figure}

It is not difficult to see that the sets of extended simple walks from $\ga_{n-1},\ga_{n-2},$ and $\ga_{n-3}$ are all disjoint. For instance, this follows by comparing their weights, and noting they are pairwise unequal. 

The last step is to show that this accounts for all simple walks on $\ga_n$. We will see that  every simple walk on $\ga_n$ is an extension of a simple walk on $\ga_{n-1}$, $\ga_{n-2}$ or $\ga_{n-3}$.

Suppose $w$ is a simple walk on $\ga_n$.  If $J = \{2\}$, then it must traverse the understrand at the $2n$-th crossing and is on the overstrand just below the $(2n-2)$-nd crossing. If it jumps down at the $(2n-2)$-nd crossing, then $w = w'' a_{2n-2} b_{2n}$, and $w$ is the extension of the simple walk $w' = w'' n_{2n-2}$ on $\ga_{n-1}$.  Otherwise, if it remains on the overstrand at the $(2n-2)$-nd crossing, then either  $w = w' a_{2n-3} c_{2n-2} b_{2n}$ for $w'$ a simple walk on $\ga_{n-2}$, or $w = w' b_{2n-5} c_{2n-3} c_{2n-2} b_{2n} $ for $w'$ a simple walk on $\ga_{n-3}$.

If instead $J = \{3\}$, then it is on the overstrand just below the $(2n)$-th crossing.  If it jumps down at the $(2n)$-th crossing, then $w= w' a_{2n}$ for $w'$ a simple walk on $\ga_{n-1}.$ Otherwise, if it remains on the overstrand at the $(2n)$-th crossing, then either $w= w' b_{2n-2}a_{2n-1}c_{2n}$ for $w'$ a simple walk on $\ga_{n-2}$, or $w= w'b_{2n-4}b_{2n-3}c_{2n-1}c_{2n}$ for $w'$ a simple walk on $\ga_{n-3}$.

Lastly, suppose $J = \{2,3\}$. Then $w$ consists of two paths, ends with $b_{2n}c_{2n},$ and is on the overstrand below the $(2n-1)$-st crossing. If it jumps down at the $(2n-1)$-st crossing, then $w=w' b_{2n} \cdot a_{2n-1}c_{2n}$ for $w'$ a simple walk on $\ga_{n-1}$. If it remains on the overstrand at the $(2n-1)$-st crossing, then it is also on the overstrand at the $(2n-2)$-nd crossing. If it jumps down at the $(2n-2)$-nd crossing, then $w=w'  a_{2n-2}b_{2n} \cdot b_{2n-3}c_{2n-1}c_{2n}$ for $w'$ a simple walk on $\ga_{n-2}$. If it remains on the overstrand at the $(2n-2)$-nd crossing, then it approaches the $(2n-4)$-th and $(2n-5)$-th crossings on understrands, and it follows that $w=w' b_{2n-5} c_{2n-3}c_{2n-2}b_{2n} \cdot  b_{2n-4} b_{2n-3} c_{2n-1} c_{2n}$ for $w'$ a simple walk on $\ga_{n-3}$.

This shows that every simple walk on $\ga_n$ is obtained by extending a simple walk on $\ga_{n-1}, \ga_{n-2}$ or $\ga_{n-3}.$ It follows that there is a bijection correspondence between the set of simple walks on $\ga_n$ and the union of the simple walks on $\ga_{n-1}, \ga_{n-2}$ and $\ga_{n-3}.$ The bijective correspondence implies that the sequence $g(n)$ of simple walks on $\ga_n$ satisfy the recurrence relation \eqref{recurrence2}.
\end{proof}

\subsection{Minimal braid representatives} \label{S8}  
\setcounter{section}{8} \setcounter{theorem}{0} 
In \Cref{table5}, we list  knots up to 9 crossings with the braid representatives giving minimal numbers of simple walks.  More extensive tables of knots up to 13 crossings and braid representatives for them can be found online at \cite{BS-ancillary}.
(In \cite{BS-ancillary} and \Cref{table1} below, we use the notation for braid words from sagemath, meaning that a braid word $ \si_{a_1}^{\ep_1}\cdots \si_{a_\ell}^{\ep_\ell}$ is denoted by $[\ep_1 a_1,\ldots, \ep_\ell a_\ell]$. Tables \ref{table4} and \ref{table5} use even more compactified notation similar to that at the end of \cite{Jones-1987}.)

These results are empirical. The braid words listed in \Cref{table5} and \cite{BS-ancillary} are the output of a sagemath program developed by the second author. It takes as input braid representatives for knots (given by the braids from  \cite{knotinfo} and \cite{Stoimenow-2002}) and applies cyclic permutation, reflection, and rotation. It then selects the braid word that minimizes the number of simple walks. The output braid word may represent the knot $K$ or its mirror image $K^*$, whichever has fewest simple walks. 

The braids listed have the fewest simple walks among all \textit{known} braid representatives for the given knots. In general, the question of finding a complete list of braid representatives for a given knot is a delicate and open problem. As we shall see, it is not enough to consider only braid representatives of minimal width. Even if it were, it is an open problem to develop an algorithm for computing the braid width of a knot (see Open Problem 1 in \cite{Birman-Brendle-2005}). Nevertheless, these problems have been studied extensively, and much is known about minimal braid representatives of low-crossing knots; see \cite{Jones-1987}, \cite{Gittings}, and \cite{Stoimenow-2002}.

Given a knot, one can look for braid representatives that minimize its braid width or the braid length. For many knots, there is a braid representative that simultaneously minimizes both the width and length, but in general, the braid representatives that minimize width need not be the same as the ones that minimize length. The earliest known examples are the knots $16_{472381}$ and $16_{1223549}$, which were discovered by Stoimenow and have braid width 4 but no minimal length braid representative of width 4, \cite[Figure 7]{Stoimenow-2002}. 

This interesting aspect has been further studied by Gittings \cite{Gittings} and Van Cott \cite{VanCott}, and the ``smallest'' example is the knot $10_{136}$. For all other knots with up to 10 crossings,  there is a braid representative that simultaneously minimizes the braid width and length. Further examples of knots whose minimal width braid representatives are not minimal length are listed in \Cref{table4}. (These examples come from \cite{knotinfo}.) 

Interestingly, the braid representative that minimizes the number of simple walks is not always a minimal length braid, nor is it always a minimal width braid either. For example, consider the knots $10_{136}$ and $11n_8$ and their braid representatives in \Cref{table4}. For $10_{136}$, the number of simple walks is minimized on a braid representative of minimal length but not one of minimal width, whereas for $11n_{8}$, the number of simple walks is minimized on a braid representative of minimal width, but not one of minimal length.
Similar examples can be found among the other knots listed in \Cref{table4}.

Our computations suggest that, for any knot, one can always minimize the number of simple walks on a braid representative of minimal width or minimal length. This is an interesting problem for future investigation. In order to make progress, we need more information about the minimal width and minimal length braid representatives for knots. At present, we do not have complete information on the 13-crossing knots. In particular, we do not know which 13-crossing knots have minimal length braid representatives that are not minimal width. The braid representatives for the 13-crossing knots from [Sto02] are known to be of minimal width, but they are not known to be of minimal length.

Notice that for every knot in \Cref{table5}, the braid representative that minimizes the number of simple walks begins with $\si_1$. This is actually true for all knots up to 10 crossings,  but not immediately true for knots with 11 or more crossings (see \cite{BS-ancillary}).

\begin{table}
{\footnotesize 
\begin{tabular}{|c|l|c|}  \hline
{\bf Knot} & \qquad {\bf Braid Word} & {\bf |SW$_{\be}$| } \\ \hline  
$\mathbf{11a_{322}}$ & $[-1,-1,2,-3,4,-3,2,-3,4,1,2,-3,-2,-2]$  & 51 \\
$\mathbf{12a_{23}}$ & $[-1,-3,2,-3,-5,2,4,1,2,-3,-4,5,4,-3,-5,4,2]$ & 153 \\
$\mathbf{12a_{155}}$ & $[-1,2,2,-3,4,-3,4,-5,-4,3,2,1,-4,5,2,-3,2]$ & 127 \\
$\mathbf{12a_{288}}$ & $[-1,2,2,-3,2,-3,2,1,2,-3,4,2,-3,4]$ & 71 \\
$\mathbf{12a_{449}}$ & $[-1,2,4,-3,4,5,4,2,-3,-4,-4,-5,1,2,4,-3,2]$ & 125 \\
$\mathbf{12a_{494}}$ & $[-1,2,-3,4,5,2,-3,-4,-4,1,2,-3,4,-5,4,-3,2]$ & 137 \\
$\mathbf{12a_{750}}$ & $[-1,2,-3,2,-3,5,4,-3,-4,-4,-5,-4,-4,-3,1,4,-2,3,-2]$ & 183 \\
$\mathbf{12n_{546}}$ & $[-1,2,3,1,-2,1,1,1,1,-2,-3,-3,2]$ & 41 \\
$\mathbf{12n_{601}}$ & $[-1,-1,2,3,3,3,2,-4,-4,-3,1,2,3,3,-4,2]$ & 67 \\
$\mathbf{12n_{622}}$ & $ [-1,-1,2,2,3,3,2,-4,1,-2,3,-4,-2,3]$ & 47 \\ \hline
\end{tabular}
\vspace{2mm}
\caption{\small Knots up to 12 crossings whose minimizing braid word begins with $\si_1^{-1}$.} \label{table1}
}
\end{table}

In general, by \Cref{lemma-1}, the minimizing braid representative can always be chosen to begin with either $\si_1$ or $\si_1^{-1}.$   For knots with 11 and 12 crossings, there are only a handful of examples whose minimizing braid representative begins with $\si_1^{-1}$ and not $\si_1$. They are listed in \Cref{table1}. (There are in addition 82 examples among the knots with 13 crossings, see \cite{BS-ancillary}.)  In each case, we can find a minimizing braid representative that begins with $\si_1$ by reversing the braid word and applying cyclic permutation. We explain these steps in more detail.

Take, for example, the first knot in \Cref{table1}, namely $11a_{322}$.  Its minimizing braid representative is $\si_1^{-2}\si_2 \si_3^{-1} \si_4 \si_3^{-1} \si_2 \si_3^{-1}\si_4 \si_1 \si_2\si_3^{-1}\si_2^{-2}.$  The reversed braid word will have the same number of simple walks, so  $\si_2^{-2}\si_3 \si_2 \si_1 \si_4 \si_3^{-1} \si_2 \si_3^{-1} \si_4 \si_3^{-1}\si_2 \si_1^{-2}$ is also a minimizing braid word for $11a_{322}$. Now repeated application of \Cref{lemma-1} shows that $\si_1 \si_4 \si_3^{-1} \si_2 \si_3^{-1} \si_4 \si_3^{-1}\si_2 \si_1^{-2} \si_2^{-2}\si_3\si_2$ is also a minimizing braid word for $11a_{322}$.

The same method applies to the other knots in \Cref{table1}. Each one admits a minimizing braid word that starts with $\si_1$. A similar argument applies to the braid representatives for the 13 crossing knots that begin with $\si_1^{-1}$. This follows by a routine but somewhat tedious exercise. 

\Cref{table2} shows the growth rate of the number of simple walks as a function of the crossing number of the knot. \Cref{table3} shows the growth rate of the number of simple walks as a function of the braid length.  Note that \Cref{table3} contains information for knots with up to 12 crossings but not the 13-crossing knots. The reason is that we do not have definitive information about the braid representatives of minimal length for the 13-crossing knots.

\pgfplotstableread[row sep=\\,col sep=&]{
    interval & SW  \\
    3     & 1.0    \\
    4     & 2.0   \\
    5     & 2.5   \\
    6     & 4.0   \\
    7     & 6.142857    \\
    8     & 8.190476    \\
    9     & 13.183574   \\
    10     & 17.163636    \\
    11     & 26.461957    \\
    12     & 40.7284    \\ 
    13     & 54.9324189    \\ 
    }\SWdata

\begin{table}
\begin{tikzpicture}
    \begin{axis}[
            width  = 0.75*\textwidth,
            height = 6cm,
            ybar,
            ymin=0,ymax=70,
            bar width=.52cm,
            symbolic x coords={3,4,5,6,7,8,9,10,11,12,13},
            xtick=data,
        ]
        \addplot[draw=black, semithick, fill= gray] table[x=interval,y=SW]{\SWdata};
        \legend{\small Average Number of Simple Walks}
    \end{axis}
\end{tikzpicture}

\caption{\small Average number of simple walks by crossing number.} \label{table2}
\end{table}

\pgfplotstableread[row sep=\\,col sep=&]{
    interval & SW  \\
    3     & 1.0    \\
    4     & 2.0   \\
    5     & 3.0   \\
    6     & 3.667   \\
    7     & 6.75    \\
    8     & 8.142857   \\
    9     & 11.71875   \\
    10     & 16.634920    \\
    11     & 21.093167   \\
    12     & 32.180438    \\
    13     & 32.619109    \\
    14	   & 40.130711 \\
    15	  & 53.921212\\
    16	& 58.711656\\
    17	& 69.3442623\\ 
        }\SWdata

\begin{table}
\begin{tikzpicture}
    \begin{axis}[
            width  = 0.85*\textwidth,
            height = 6cm,
            ybar,
            ymin=0,ymax=110,
            bar width=.52cm,
            symbolic x coords={3,4,5,6,7,8,9,10,11,12,13,14,15,16,17},
            xtick=data,
        ]
        \addplot[draw=black, semithick, fill= gray] table[x=interval,y=SW]{\SWdata};
        \legend{\small Average Number of Simple Walks}
    \end{axis}
\end{tikzpicture}
\caption{\small Average number of simple walks by braid length.} \label{table3}
\end{table}

We end this paper with a few questions and open problems for future investigation. One is whether braid words that minimize the number of simple walks have a preferred shape or form. By \Cref{prop:reduced-irreducible}, we can assume the braid word is reduced and irreducible, and by \Cref{lemma-1}, we can assume it begins with $\si_1$ or $\si_1^{-1}.$ We conjecture the braid word can always be chosen to begin with $\si_1$. Further results as to the shape of minimizing braid words would be helpful for developing efficient search algorithms.

More generally, it would be extremely useful to automate the generation of minimal width and/or minimal length braid representatives for a given knot. Such tools would allow fast computation of the colored Jones polynomial and other quantum knot invariants, enabling calculations for higher crossing knots, including those in the knot tables of Burton \cite{Burton-2020}, who has recently  extended the classification of knots to 19 crossings.

\subsection*{Auxilliary files} Sagemath programs and datasets are available online \cite{BS-ancillary}. This includes a program that generates all the simple walks as operators for a given braid
and another that selects braid words that minimize the number of simple walks. It also includes input datasets used to create Tables \ref{table4} and \ref{table5}, as well as output datasets of braid words that minimize the number of simple walks for knots up to 13 crossings.

\subsection*{Acknowledgements}
The first author would like to acknowledge funding from the Natural Sciences and Engineering Research Council of Canada, and the second author acknowledges funding from a USRA award and a Stewart award from McMaster University.
The authors are especially grateful to Alexander Stoimenow for providing crucial input. They 
would also like to thank Homayun Karimi, Robert Osburn, Andrew Nicas, Will Rushworth, and Cornelia Van Cott for valuable feedback.  

\renewcommand{\arraystretch}{1.15}
\begin{table}
{\footnotesize 
{\rowcolors{1}{white!80!white!50}{lightgray!70!lightgray!40}
\begin{tabular}{|c|c|c|c|c|}  \hline
{\bf Knot} & {\bf Minimal width braid} & \; {\bf |SW$_\be$|} \; &  {\bf Minimal length braid} & \; {\bf |SW$_\be$|}\; \\ \hline  
$\mathbf{10_{136}}$ & $\mathbf{1 2 3^{-1} 2 1^{-1} 2^2 3^{-1} 2^{-2} 1}$ & 21 &
$\mathbf{1 2^{-1} 3^{-1} 2^2 4 3^{-1} 4 1 2^{-1}}$ & 17 \\ 
$\mathbf{11n_8}$ & $\mathbf{123^{-1}2^{-1} 1 2^{-1} 1^{-1} 2 3^2 2^2 1}$ & 20
& $\mathbf{1 2 1^{-1} 2 1 3^{-1} 2^2 4 3^{-1} 4 1} $ & 23  \\  
$\mathbf{11n_{121}}$ & $\mathbf{1 2 3^2 2^{-1} 1 2^{-1}1^{-1} 2^2 3^{-1} 2 1} $ & 20 & 
$\mathbf{1 2 1^2 3^{-1} 2 1^{-1} 2 4 3^{-1} 4 1}$ & 21 \\ 
$\mathbf{11n_{131}}$ & $\mathbf{1 2 3^2  2^{-2} 1^2 2^{-1} 3^{-1} 2 1 2^{-1}} $ & 17 & 
$\mathbf{12 3^{-1} 2 1 3^2 2^{-1} 3^{-1} 4 3^{-1} 4}$ & 21 \\  
$\mathbf{12n_{17}}$ & $\mathbf{2^{-1} 1 2^{-1} 3 4^{-2} 3 2^{-2} 3^{-1} 4 3^{-1} 4 1} $ & 63 & 
$\mathbf{(1 2^{-1})^2  3^{-1} 2^2 4 3^2 5^{-1} 4 5^{-1} }$ & 52 \\  
$\mathbf{12n_{20}}$ & $\mathbf{3^{-1} 1 2^{-1} (3 2^{-1})^2 1^{-1} 2 3^{-1} 2^2 1} $ & 26 & 
$\mathbf{(1 2^{-1})^2 (3^{-1} 2)^2 4 3^{-1} 4 1}$ & 32 \\ 
$\mathbf{12n_{24}}$ & $\mathbf{1 2^{-1} 3 2^{-2} 3 2^{-1} 3^{-1} 1^{-1} 2 3^{-1} 2 1} $ & 35 & 
$\mathbf{2^{-1} 1 2^{-1} 3^{-1} 2^2 4 3^{-3} 4 1}$ & 32 \\ 
$\mathbf{12n_{65}}$ & $\mathbf{2^{-1} 1 2 3^{-1} 4 2 4 3^{-1} 2 3^{-1} 4^{-1} 2 3^{-1} 1} $ & 26 &
$\mathbf{1 2 1^{-1} 3^{-1} 2 3^{-1} 4^{-1} 3^2 5 4^{-1} 5 1}$ & 25 \\ 
$\mathbf{12n_{119}}$ & $\mathbf{2^{-1} 1^2 2^{-1} 3 1 3 (1 2^{-1})^2 3^{-1} 1} $ & 31 &
$\mathbf{2^{-1} 1 3^{-1} 2 4 3^{-1} 4 1^2 2^{-1} 1^2}$ & 23 \\ 
$\mathbf{12n_{284}}$ & $\mathbf{(12^{-2})^2 3 1 3 2^{-1} 1 2^{-1}3^{-1}} $ & 36 &
$\mathbf{2^{-1} 1 2^{-1} 3^{-1} 2 3^{-1} 4 3^{-1} 2 3^{-1} 4 1}$ & 32 \\ 
$\mathbf{12n_{311}}$ & $\mathbf{412 1^{-1} 2 3^{-1} 4 2^{-1} 3^{-1} 4 3 2^{-1} 3^{-1} 1} $ & 37 &
$\mathbf{2^{-1} 1 3 2 4^{-1} 3^{-1} 2 3^{-1} 5 4^{-1} 3 5 1}$ & 30 \\ 
$\mathbf{12n_{314}}$ & $\mathbf{2^{-1} 3^{-1} (1 2^{-1})^3 3 1 3 2^{-1} 1} $ & 34 &
$\mathbf{(1 2^{-1})^2 3^{-1} 2 1 4 3^{-1} 2^{-1} 4 3}$ & 24 \\ 
$\mathbf{12n_{358}}$ & $\mathbf{2^{-1} 3^{-1} 1 2 1^{-1} 2 1 2^{-1} 3 1 3 2^{-1} 1} $ & 24 &
$\mathbf{3^{-1} 2^{-1} 1 4 3 2^{-1} 3 4^{-1} 3 4 2 1}$ & 20 \\ 
$\mathbf{12n_{362}}$ & $\mathbf{1 2 3^{-2} 2 1^{-1} 2^2 3^{-1} 2^{-1} 3 2^{-1} 1} $ & 42 &
$\mathbf{3^{-1} 2^{-1} 1 2^{-2} 4 3^{-1} 4 1^2 2 1}$ & 24 \\ 
$\mathbf{12n_{403}}$ & $\mathbf{1 2^{-1} 3^4 2^{-1} 3^{-1} 1^{-1} 2 3^{-1} 2 1} $ & 37 &
$\mathbf{1 2 1 3^{-1} 2^{-1} 1 2^{-1} 4 3^{-1} 2 4 1}$ & 20 \\ 
$\mathbf{12n_{482}}$ & $\mathbf{2^{-1} 3^{-1} 1 3^{-1} 2^2 1^{-2} 2 3 2^{-1} 1^2} $ & 29 &
$\mathbf{2^{-1} 1 2^{-1} 3^{-1} 2 4 3^{-1} 2 3^{-2} 4 1}$ & 32 \\ \hline
\end{tabular}
}
\vspace{2mm}
\caption{\small Simple walks for representatives of minimal braid width and length.} \label{table4}
}
\end{table}
\renewcommand{\arraystretch}{1.00}

\newpage
\newcommand{\etalchar}[1]{$^{#1}$}

\begin{table}
\renewcommand{\arraystretch}{1.15}
\begin{tabular}{lr}
{\footnotesize \begin{minipage}{0.5\textwidth}
{\rowcolors{1}{white!80!white!50}{lightgray!70!lightgray!40}
\begin{tabular}{|c|c|c|}  \hline
{\bf Knot} & {\bf Braid Word} & {\bf |SW$_{\be}$| } \\ \hline  
$\mathbf{3_1}$ & $\mathbf{1^3} $ & 1 \\ 
$\mathbf{4_1}$ & $\mathbf{(1 2^{-1})^2} $ & 2 \\ 
$\mathbf{5_1}$ & $\mathbf{1^5} $ & 3  \\ 
$\mathbf{5_2}$ & $\mathbf{12^3 12^{-1}} $ & 2 \\  
$\mathbf{6_1}$ & $\mathbf{121^{-1}3^{-1}23^{-1}1} $ & 3 \\ 
$\mathbf{6_2}$ & $\mathbf{1^32^{-1}12^{-1}} $ & 4\\ 
$\mathbf{6_3}$ & $\mathbf{1^2 2^{-1}12^{-2}} $ & 5\\ 
$\mathbf{7_1}$ & $\mathbf{1^7} $ & 8  \\ 
$\mathbf{7_2}$ & $\mathbf{123^3 1^{-1}213^{-1}} $ & 3 \\ 
$\mathbf{7_3}$ & $\mathbf{1^4 2 1^{-1} 21} $ & 6  \\ 
$\mathbf{7_4}$ & $\mathbf{12^232^{-1}321 2^{-1} } $ & 5\\ 
$\mathbf{7_5}$ & $\mathbf{1^3 21^{-1}2^21} $ & 5 \\ 
$\mathbf{7_6}$ & $\mathbf{1^2 2^{-1} 132^{-1}3} $ & 8 \\ 
$\mathbf{7_7}$ & $\mathbf{(12^{-1})^2 32^{-1}3} $  & 8 \\  
$\mathbf{8_1}$ & $\mathbf{1234^{-1}2^{-1}3212^{-1}4^{-1}} $ & 4 \\ 
$\mathbf{8_2}$ & $\mathbf{1^5 2^{-1}12^{-1}} $ & 9 \\
$\mathbf{8_3}$ & $\mathbf{121^{-1}3^{-1}23^{-1}4^{-1}34^{-1} 1} $ & 9 \\ 
$\mathbf{8_4}$ & $\mathbf{123^{-1}23^{-3} 12^{-1}} $ & 7 \\ 
$\mathbf{8_5}$ & $\mathbf{(1^3 2^{-1})^2 } $ & 8 \\ 
$\mathbf{8_6}$ & $\mathbf{1^3 2 1^{-1} 3^{-1} 2 3^{-1} 1} $ & 7 \\ 
$\mathbf{8_7}$ & $\mathbf{1^4 2^{-1} 1 2^{-2} } $ & 10 \\ 
$\mathbf{8_8}$ & $\mathbf{1^2 2 1^{-1} 3^{-1} 2 3^{-2} 1 } $ & 10 \\ 
$\mathbf{8_9}$ & $\mathbf{1^3 2^{-1} 1 2^{-3} } $ & 9 \\ 
$\mathbf{8_{10}}$ & $\mathbf{1^3 2^{-1} 1^2 2^{-2} } $ & 9 \\ 
$\mathbf{8_{11}}$ & $\mathbf{1 2 1^{-1} 2^2 3^{-1} 2 3^{-1} 1 } $ & 7 \\ 
$\mathbf{8_{12}}$ & $\mathbf{( 1 2^{-1}34^{-1})^2 } $ & 14 \\ 
$\mathbf{8_{13}}$ & $\mathbf{1 2^2 3^{-1} 2 3^{-2} 1 2^{-1}} $ & 8 \\ 
$\mathbf{8_{14}}$ & $\mathbf{1^2 2 1^{-1} (2 3^{-1})^2 1 } $ & 8 \\ 
$\mathbf{8_{15}}$ & $\mathbf{1 2^3 1 3 2^{-1} 3^2 } $ & 9 \\ 
$\mathbf{8_{16}}$ & $\mathbf{1^2 2^{-1} 1 2^{-1} 1^2 2^{-1} } $ & 9 \\ 
$\mathbf{8_{17}}$ & $\mathbf{1 (1 2^{-1})^3 2^{-1} } $ & 9 \\ 
$\mathbf{8_{18}}$ & $\mathbf{(1 2^{-1})^4 } $ & 10 \\ 
$\mathbf{8_{19}}$ & $\mathbf{(1 2^3)^2 } $ & 5 \\ 
$\mathbf{8_{20}}$ & $\mathbf{1 2^3 1 2^{-3} } $ & 5 \\ 
$\mathbf{8_{21}}$ & $\mathbf{1^2 2 1^{-2} 2^2 1 } $ & 6 \\ 
$\mathbf{9_1}$ & $\mathbf{1^9} $ & 21   \\
$\mathbf{9_2}$ & $\mathbf{ 12343^{-1} 42^{-1} 3^2 21 2^{-1} } $ & 5     \\ 
$\mathbf{9_3}$ & $\mathbf{ 1^6 21^{-1} 21 } $ & 14    \\ 
$\mathbf{9_4}$ & $\mathbf{ 123^2 2^4 3^{-1} 12 } $ & 9    \\ 
$\mathbf{9_5}$ & $\mathbf{ 12 1^{-1} 2342^{-1}3^{-1} 4321 } $ & 10    \\ 
$\mathbf{9_6}$ & $\mathbf{ 1^5 2 1^{-1} 2^2 1 } $ & 12     \\ 
$\mathbf{9_7}$ & $\mathbf{ 12 3^4 2^2 3^{-1} 12^{-1}  } $ & 9    \\ \hline
\end{tabular}
}
\end{minipage} 

\begin{minipage}{0.5\textwidth}
{\rowcolors{1}{white!80!white!50}{lightgray!70!lightgray!40}
\begin{tabular}{|c|c|c|}  \hline
{\bf Knot} & {\bf Braid Word} & {\bf |SW$_\be$| } \\ \hline  
$\mathbf{9_8}$ & $\mathbf{1412^{-1}3^{-1}423^{-1}1 2^{-1}} $ &  13 \\  
$\mathbf{9_9}$ & $\mathbf{ 1 2^3 1^{-1}2 1^4 } $ &  12 \\ 
$\mathbf{9_{10}}$ & $\mathbf{ 123 2^{-1} 3 2^3 1^{-1} 21 } $ &  11 \\  
$\mathbf{9_{11}}$ & $\mathbf{ 12^{-1} 1^3 312^{-1}3 } $ &  15 \\  
$\mathbf{9_{12}}$ & $\mathbf{ 123^{-1} 4 1^{-1} 23^{-1} 414} $ &  13 \\  
$\mathbf{9_{13}}$ & $\mathbf{ 123 2^{-1} 321^{-1} 2^3 1 } $ &  10 \\  
$\mathbf{9_{14}}$ & $\mathbf{ 121^{-1} 3^{-1} 23^{-1} 43^{-1} 41 } $  &  11 \\  
$\mathbf{9_{15}}$ & $\mathbf{ 1^2 23^{-1} 41^{-1} 23^{-1} 41 } $ &  16 \\  
$\mathbf{9_{16}}$ & $\mathbf{ 12^3 1^{-1}2^2 1^3 } $ &  10 \\  
$\mathbf{9_{17}}$ & $\mathbf{ (12^{-1})^2 2^{-1} (32^{-1})^2 } $ &  17 \\ 
$\mathbf{9_{18}}$ & $\mathbf{ 1 2^2 3^2 2^3 12^{-1} 3^{-1} } $ &  10 \\ 
$\mathbf{9_{19}}$ & $\mathbf{ 12^{-2} 3^{-1} 243^{-1} 41 2^{-1} } $  &  14 \\ 
$\mathbf{9_{20}}$ & $\mathbf{ 1^3 2^{-1} 3132^{-1} 3 } $ &  15 \\  
$\mathbf{9_{21}}$ & $\mathbf{ 121^{-1} 23^{-1} 243^{-1} 41 } $ &  12 \\  
$\mathbf{9_{22}}$ & $\mathbf{ 12^{-1} 31 2^{-3} 32^{-1} } $ &  17 \\ 
$\mathbf{9_{23}}$ & $\mathbf{ 12^2 1^{-1} 23^{-1} 2^2 13^2 } $  &  15 \\  
$\mathbf{9_{24}}$ & $\mathbf{ 131 2^{-1} 31 2^{-3} } $ &  17 \\  
$\mathbf{9_{25}}$ & $\mathbf{ 1 2^3 34^{-1} 12^{-1} 34^{-1} } $ &  14 \\  
$\mathbf{9_{26}}$ & $\mathbf{ 12^{-1}1^2 312^{-1}3  2^{-1}} $ &  13 \\ 
$\mathbf{9_{27}}$ & $\mathbf{ 1^2 2^{-1}12^{-2} 32^{-1} 3 } $  &  15 \\  
$\mathbf{9_{28}}$ & $\mathbf{ 1 2^{-1} 131 2^{-2} 3^2 } $ &  17 \\ 
$\mathbf{9_{29}}$ & $\mathbf{ (12^{-1}32^{-1})^2 2^{-1} } $ &  16 \\  
$\mathbf{9_{30}}$ & $\mathbf{ 1^2 2^{-2} 12^{-1} 32^{-1} 3  } $  &  16 \\ 
$\mathbf{9_{31}}$ & $\mathbf{ 12^{-1} 1312^{-1} 3^2 2^{-1} } $ &  15 \\  
$\mathbf{9_{32}}$ & $\mathbf{ 1 (1 2^{-1})^2 312^{-1}3 } $ &  14 \\ 
$\mathbf{9_{33}}$ & $\mathbf{ (12^{-1})^2 2^{-1} 312^{-1} 3 } $  &  16 \\  
$\mathbf{9_{34}}$ & $\mathbf{ 12^{-1} 3 (12^{-1})^2 32^{-1} } $  &  13 \\  
$\mathbf{9_{35}}$ & $\mathbf{ 1234^{-1} 341^{-1} 42^{-1} 321 23^{-1} } $ &  17 \\  
$\mathbf{9_{36}}$ & $\mathbf{ 1^3 2^{-1} 1312^{-1}3 } $ &  14 \\  
$\mathbf{9_{37}}$ & $\mathbf{ (12^{-1} 3)^2 43^{-1} 23^{-1} 2^{-1}4^{-1}} $ &  29 \\  
$\mathbf{9_{38}}$ & $\mathbf{ 123^2 2 1^{-1} 23^{-1} 2^2 1 } $ &  13 \\ 
$\mathbf{9_{39}}$ & $\mathbf{ 123^{-1}2143^{-1}2^{-1}4^{-1}34^2 } $ &  18 \\  
$\mathbf{9_{40}}$ & $\mathbf{ 12^{-1} 312^{-1} 132^{-1} 3 } $ &  14 \\ 
$\mathbf{9_{41}}$ & $\mathbf{ 1 3^{-1} 423^{-1} 23^{-2} 1 2^{-1} 34} $  &  20 \\  
$\mathbf{9_{42}}$ & $\mathbf{ 123^{-1} 21 2^{-3} 3^{-1}} $ &  7 \\ 
$\mathbf{9_{43}}$ & $\mathbf{ 1 2^3 3^{-1} 123^{-1} 2 } $  &  8 \\  
$\mathbf{9_{44}}$ & $\mathbf{ 123^{-1} 2^2 12^{-1} 3^{-1}2^{-1} } $ &  7 \\  
$\mathbf{9_{45}}$ & $\mathbf{ 1 2^3 312^{-1} 3 2^{-1}} $ &  8 \\  
$\mathbf{9_{46}}$ & $\mathbf{ 123^{-1} 212^{-1} 32^{-1}3 } $ &  8 \\ 
$\mathbf{9_{47}}$ & $\mathbf{ (123^{-1} 2)^2 3^{-1} } $ &  8 \\  
$\mathbf{9_{48}}$ & $\mathbf{ 123^2 2^{-1} 12^{-1} 3^{-1}1^{-1}21 } $ &  9 \\ 
$\mathbf{9_{49}}$ & $\mathbf{ 12^2 131 2^{-1} 321 2^{-1}} $ &  9 \\  \hline
\end{tabular}
}
\end{minipage} 
}
\end{tabular}
\renewcommand{\arraystretch}{1.00}

\vspace{2mm}
\caption{\small Knots up to 9 crossings and braid word representatives that minimize the number of simple walks.} \label{table5}
\end{table}

%
\end{document}